\newcommand{\re}{\mathbb{R}}
\newcommand{\na}{\mathbb{N}}
\newcommand{\eps}{\varepsilon}
\newcommand{\diver}{\operatorname{div}}
\newcommand{\HS}{\text{HS}}
\newcommand{\sobolevp}{W^{1,p}(\re^d)}
\newcommand{\sobolevq}{W^{-1,p'}(\re^d)}
\newcommand{\dx}{\, dx}
\newcommand{\erwb}{\mathbb{E}\left[}
\newcommand{\erwe}{\right]}
\newcommand{\otr}{\Omega\times(0,T)\times\re^d}
\newcommand{\halbe}{\frac{1}{2}}
\newcommand{\ueps}{u_\eps}
\newcommand{\nablas}[1]{|\nabla{#1}|^{p-2}\nabla{#1}}
\newcommand{\ranglev}{\rangle_{\mathcal{V}',\mathcal{V}}}
\newcommand{\chitk}{\chi_{[t_k,t_{k+1})}}
\newcommand{\otwr}[1]{\Omega\times(0,T);L^{#1}(\re^d)}
\newtheorem{definition}{Definition}[section]
\newtheorem{lemma}[definition]{Lemma}
\newtheorem{theorem}[definition]{Theorem}
\newtheorem{proposition}[definition]{Proposition}
\newenvironment{proof}{\noindent{\textit{Proof.}}}{\hfill$\square$}
\begin{document}
\title{The stochastic $p$-Laplace equation on $\re^d$}

\author{Kerstin Schmitz, \footnote{Faculty of Mathematics, University of Duisburg-Essen, Thea-Leymann-Str.9, 45127 Essen, Germany
\href{mailto:kerstin.schmitz@uni-due.de}{\nolinkurl{ kerstin.schmitz@uni-due.de}}} \and Aleksandra Zimmermann \footnote{Faculty of Mathematics, University of Duisburg-Essen, Thea-Leymann-Str.9, 45127 Essen, Germany  \href{mailto:aleksandra.zimmermann@uni-due.de}{\nolinkurl{ aleksandra.zimmermann@uni-due.de}}}}
\date{}
\maketitle

\begin{abstract}
We show well-posedness of the $p$-Laplace evolution equation on $\mathbb{R}^d$ with square integrable random initial data for arbitrary $1<p<\infty$ and arbitrary space dimension $d\in\na$. The noise term on the right-hand side of the equation may be additive or multiplicative. Due to a lack of coercivity of the $p$-Laplace operator in the whole space, the possibility to apply well-known existence and uniqueness theorems in the classical functional setting is limited to certain values of $1<p<\infty$ and also depends on the space dimension $d$. We propose a framework of functional spaces which is independent of Sobolev space embeddings and space dimension. For additive noise, we show existence using a time discretization. Then, a fixed-point argument yields the result for multiplicative noise.
\end{abstract}
\begin{flushleft}
\textbf{Keywords:} Stochastic $p$-Laplace equation, whole space, monotonicity method, time discretization
\end{flushleft}
\begin{flushleft}
\textbf{2020 Mathematics subject classification:} 60H15, 35R60, 35K55
\end{flushleft}

\section{Introduction}
\label{intro}
In the theory of filtration of an elastic fluid in a porous medium (see, \cite{Barenblatt} Section 3.2.1) the linear Darcy law expresses that the velocity of filtration is proportional to the pressure gradient. In the presence of a heterogeneous medium or turbulence, a generalized, $p$-power type version of the Darcy law has been proposed in \cite{Diaz} (see also the references therein). Substituting into the continuity equation, normalizing constants and discarding lower-order terms we find  
\begin{equation}\label{P}
\partial_t u-\operatorname{div}(|\nabla u|^{p-2}\nabla u)=f
\end{equation}
where $u=u(t,x)$ is the unknown function that may be interpreted as the volumetric moisture content (see \cite{EV}), $1<p<\infty$ varies with the properties of the flow and $f=f(t,x)$ models an external force. For fixed $p\in (1,\infty)$, the second order diffusion operator $\Delta_p(u):=\operatorname{div}(|\nabla u|^{p-2}\nabla u)$ on the left-hand side of \eqref{P} is called $p$-Laplace operator. On a spatial domain $D\subseteq\mathbb{R}^d$, $-\Delta_p(u)$ is a monotone, variational operator on the Sobolev space 
\[W^{1,p}(D)=\left\{u\in L^p(D) : \ \frac{\partial u}{\partial{x_i}}\in L^p(D), \ i=1,\dots,d\right\},\]
for all $1<p<\infty$, and this operator is singular for $p<2$ because $|\nabla u|^{p-2}=+\infty$ for $\nabla u=0$ and degenerate for $p>2$. In the case of a bounded domain, the initial-boundary value problem for \eqref{P} with square integrable initial condition, Dirichlet or Neumann type boundary values and square integrable right-hand side is a classical problem of monotone nonlinear evolution equations and has already been studied in \cite{Lad} and \cite{Lions}. In the following decades, monotonicity methods have been applied to more general operators of Leray-Lions type, nonlinear or set valued boundary values and merely integrable initial data and right-hand sides. In some cases, well-posedness could be established for generalized notions of solution such as entropy solutions, see, e.g., \cite{AMST99} and renormalized solutions (see, e.g., \cite{B93, BMR01, AW03}).\\
There are more and more stochastic models in natural, human and also social sciences. One way to take the complexity of the phenomena, the uncertainties of the model and the effects of multiscale interactions into account is to add random influences (see, e.g., \cite{Geigeretal} and \cite{Tartakovsky}). Randomness can be introduced by a stochastic forcing of the driving partial differential equation (PDE), e.g., by adding a stochastic integral on the right-hand side. Consequently, the PDE becomes a stochastic partial differential equation (SPDE) and the solution is then a stochastic process.\\ 
In this contribution, our aim is to study the stochastic $p$-Laplace evolution equation on $\mathbb{R}^d$, i.e.,
\begin{equation}\label{stP}
du-\operatorname{div}(|\nabla u|^{p-2}\nabla u)\, dt=\Phi\,dW_t \quad \text{in} \ \Omega\times (0,T)\times\mathbb{R}^d
\end{equation}
for fixed $T>0$, a fixed stochastic basis $(\Omega,\mathcal{F},P,(\mathcal{F}_t)_{t\geq 0}, (W_t)_{t\geq 0})$, arbitrary space dimension $d\in\mathbb{N}$ and arbitrary $1<p<\infty$. The right-hand side of \eqref{stP} is understood in the sense of a stochastic It\^{o} integral with respect to the cylindrical Wiener process $(W_t)_{t\geq 0}$ with values in $L^2(\mathbb{R}^d)$. We will consider both additive noise, i.e., $\Phi$ is progressively measurable with values in the space of Hilbert-Schmidt operators from $L^2(\mathbb{R}^d)$ to $L^2(\mathbb{R}^d)$ and multiplicative noise, i.e., $\Phi=B(u)$ and the precise assumptions on $B$ are given in Subsection \ref{1.1}.\\
In \cite{BT72} and \cite{PardouxThese}, the method of monotonicity which was developed in \cite{Lions} for deterministic equations has been extended to stochastic evolution equations. The results have been generalized in \cite{KryRoz81} and \cite{LiuRock}:
For an appropriately chosen separable Banach space $V$ with dual space $V'$ and an intermediate Hilbert space $H$ such that $V\hookrightarrow H\equiv H'\hookrightarrow V'$ with continuous and dense embeddings one considers the stochastic evolution equation driven by a nonlinear operator $A:V\rightarrow V'$. The key properties for well-posedness (i.e., existence and uniqueness of solutions) are the monotonicity, the coercitivity and the growth condition of the operator $A$. For $A(u)=-\Delta_p(u)=-\operatorname{div}(|\nabla u|^{p-2}\nabla u)$ with $1<p<\infty$, on a bounded domain $D\subset\mathbb{R}^d$ with homogeneous Dirichlet boundary data a natural choice is $V=W^{1,p}_0(D)\cap L^2(D)$, where $W^{1,p}_0(D)$ denotes the closure of the test functions with respect to the norm in $W^{1,p}(D)$. From the Sobolev embedding theorem it follows that $V=W^{1,p}_0(D)$ for $p>\frac{2d}{d+2}$. In this setting, the monotonicity and the growth condition are a consequence of the fundamental algebraic inequality
\[(|a|^{p-2}a-|b|^{p-2}b)\cdot(a-b)\geq 0\] 
that holds for all $a,b\in\mathbb{R}^d$ and $1<p<\infty$. Coercivity follows from the Poincar\'e inequality.\\
However, on $\mathbb{R}^d$, the Poincar\'e inequality and certain Sobolev space embeddings do not hold true and the question of appropriate functional spaces such that the coercitivity and the growth condition hold true for all values of $1<p<\infty$ and in arbitrary space dimension $d\in\mathbb{N}$ becomes delicate. In this contribution, we propose a functional setting for the $p$-Laplace operator on $\mathbb{R}^d$ and we show that, in our setting, the coercitivity and growth conditions proposed in \cite{LiuRock, Barbu, Ren} do not hold both at the same time for $1<p<\infty$ in a certain range. Then we prove well-posedness, i.e., existence and uniqueness of solutions to \eqref{stP} in the proposed setting with both an additive stochastic perturbation and a multiplicative stochastic perturbation.\\
For the porous medium equation on $\mathbb{R}^d$, existence and uniqueness have been studied in \cite{BaRoRu15}. Therein, the porous medium operator $\Delta\Psi(u)$ has been considered for $\Psi:\mathbb{R}\rightarrow\mathbb{R}$ monotonically nondecreasing and Lipschitz continuous or eventually maximal monotone with appropriate polynomial growth. The linear multiplicative noise term in \cite{BaRoRu15} is given by a stochastic It\^{o} integral with respect to a $Q$-Wiener process on $H^{-1}(\mathbb{R}^d)$.\\
 
In the special case $p=2$, equation \eqref{stP} becomes the stochastic heat equation
\begin{align*}\tag{SHE}
du-\Delta u\, dt=\Phi\,dW_t
\end{align*}
which has been studied intensively using the semigroup representation of solutions, see, e.g., \cite{Iwata87, Reimers89, Shiga94, Myt06, Myt11} and the list is far from being complete.\\
Over the past years, the theories of rough paths and regularity structures have developed quickly and allow to give consistent interpretations for a number of ill-posed equations, including singular cases of (SHE) with space-time white noise. Already in space dimension $d=1$, mollified versions of the multiplicative stochastic heat equation do not converge (see, e.g., \cite{HP15}). In \cite{HL18}, (SHE) on the whole space has been addressed in the framework of regularity structures and in \cite{DGT} and \cite{FNSt} in the framework of rough paths theory (see, e.g., \cite{GIP} for more general information on the topic).\\
Passing from (SHE) to \eqref{stP} for arbitrary $1<p<\infty$, the regularity of solutions does not improve, while nonlinear expressions of the gradient enter into the leading second order operator. Hence, a semigroup representation of the mild solution is not available and many interesting questions concerning singular versions of \eqref{stP} remain open.\\

In our setting, the multiplicative noise term is Lipschitz continuous and the Wiener process may be interpreted to be white in time and coloured in space. However, our aim is to establish a functional setting which allows a further investigation of the stochastic $p$-Laplace equation on the whole space with more general (additive and multiplicative) noise.

\subsection{Results and outline}\label{1.1}

We study the stochastic $p$-Laplace equation on $\re^d$ 
\begin{align}\tag{SPDE}\label{stochplaplaceequation}
\begin{aligned}
d u-\diver(|\nabla u|^{p-2}\nabla u)\,dt&+f(u)\, dt=\Phi\,dW_t \quad &\text{in } \Omega\times(0,T)\times\re^d\\
u(0,\cdot)&=u_0 \quad&\text{in } \Omega\times\re^d
\end{aligned}
\end{align}
for $T>0$, with arbitrary $1<p<\infty$, in any space dimension $d\in\na$ on a probability space $(\Omega,\mathcal{F},P)$ endowed with a right-continuous, complete filtration $(\mathcal{F}_t)_{t\geq 0}$ and a cylindrical Wiener process $(W_t)_{t\geq 0}$ with respect to $(\mathcal{F}_t)_{t\geq 0}$ taking values in $L^2(\re^d)$. The right-hand side of \eqref{stochplaplaceequation} is understood in the sense of It\^{o}. The initial condition $u_0$ is in $L^2(\Omega\times\re^d)$ and $\mathcal{F}_0$-measurable. 
First, we will consider additive stochastic noise for a general, progressively measurable $\Phi\in L^2(\Omega\times(0,T);\HS(L^2(\re^d)))$,
where $\HS(L^2(\re^d))$ denotes the space of Hilbert-Schmidt operators from $L^2(\re^d)$ to $L^2(\re^d)$. Then, we will consider multiplicative noise, i.e., $\Phi=B(u)$ where $B:\Omega\times (0,T)\times L^2(\mathbb{R}^d)\rightarrow \HS(L^2(\re^d))$ satisfies the following conditions:
\begin{itemize}
\item[$(B1)$] $B$ is a Carath\'{e}odory function in the sense that 
\[(0,T)\times L^2(\mathbb{R}^d)\ni (t,v)\mapsto B(\omega,t,v)\] is continuous for almost every $\omega\in\Omega$ and 
\[\Omega\times (0,T)\ni(\omega,t)\mapsto B(\omega,t,v)\] 
is progressively measurable for every $v\in L^2(\mathbb{R}^d)$.
\item[$(B2)$] $B$ is Lipschitz continuous, i.e., there exists $L>0$ such that
\[\Vert B(\omega,t,v)-B(\omega,t,w)\Vert_{\HS(L^2(\mathbb{R}^d))}\leq L \Vert v-w\Vert_{L^2(\mathbb{R}^d)}\]
for all $v,w\in L^2(\mathbb{R}^d)$ and all $t\in (0,T)$ and almost every $\omega\in\Omega$. 
\item[$(B3)$] $B$ has sublinear growth, i.e., there exists $C\geq 0$ such that
\[\Vert B(\omega, t,v)\Vert_{\HS(L^2(\mathbb{R}^d))}\leq C(1+\Vert v\Vert_{L^2(\mathbb{R}^d)})\]
for all $v\in L^2(\mathbb{R}^d)$ and all $t\in (0,T)$ and almost every $\omega\in\Omega$.
\end{itemize}
As usual, we will not write the variable $\omega$ when it is not relevant for the argumentation.
Thanks to $(B1)$, $B$ is $\mathcal{F}\times \mathcal{B}(0,T)\times\mathcal{B}(L^2(\mathbb{R}^d))/\mathcal{B}(\HS(L^2(\mathbb{R}^d)))$-measurable. Moreover, for any progressively measurable function $u$ in $L^2(\Omega\times (0,T);L^2(\mathbb{R}^d))$, 
\[\Omega\times (0,T)\ni(\omega,t)\mapsto B(\omega,t,u(\omega,t,\cdot))\in \HS(L^2(\mathbb{R}^d))\] 
is progressively measurable and, thanks to $(B3)$, in $L^2(\Omega\times(0,T);\operatorname{HS}(L^2(\mathbb{R}^d)))$.\\

The challenge of the stochastic $p$-Laplace equation in the whole space $\re^d$ lies in the absence of Poincar\'{e}'s inequality and certain Sobolev space embeddings: We recall that, for space dimension $d=2$ and $1\leq p<2$ as well as for space dimension $d>2$ and $1\leq p<d$ such that $p\in [\frac{2d}{d+2},2]$ we have the continuous embedding  $W^{1,p}(\mathbb{R}^d)\hookrightarrow L^2(\mathbb{R}^d)$ (see, e.g. \cite[Theorem 9.9, p.278 and Corollary 9.10, p.281]{brezis}). In the case $p=d$, the embedding $W^{1,p}(\mathbb{R}^d)\hookrightarrow L^2(\mathbb{R}^d)$ holds true for $p=d=1$ or $p=d=2$ and is in general not true if $p>d$ and $p>2$ (see, e.g., \cite[Corollary 9.11, p.281 and Theorem 9.12, p.282]{brezis}). To overcome this difficulty, for any $p,p'\in (1,\infty)$ such that $\frac{1}{p}+\frac{1}{p'}=1$ one may consider the space
\[\mathcal{V}:=\{v\in L^2(\mathbb{R}^d) \, : \, \nabla v\in L^p(\mathbb{R}^d)^d\}\]
endowed with the norm 
\[\Vert v\Vert_{\mathcal{V}}:=\Vert v\Vert_2+\Vert\nabla v\Vert_p, \ v\in \mathcal{V}.\]
It is not very hard to see that $\mathcal{V}$ is a reflexive and separable Banach space with dual space
\[\mathcal{V}'=\{f-\operatorname{div}\,F \, : \, f\in L^2(\mathbb{R}^d), \ F\in L^{p'}(\mathbb{R}^d)^d\}\]
and the $p$-Laplace operator
\begin{align*}
\Delta_p:\mathcal{V}&\longrightarrow \mathcal{V}'\\
v&\longmapsto\diver(|\nabla v|^{p-2}\nabla v)
\end{align*}
is well-defined.
\begin{definition}\label{solution}
A solution to \eqref{stochplaplaceequation} for arbitrary $1<p<\infty$ is a square-integrable, $(\mathcal{F}_t)_{t\geq 0}$-adapted stochastic process $u:\Omega\times [0,T]\rightarrow L^2(\mathbb{R}^d)$ with a.s. continuous paths such that
$u\in L^q(\Omega\times (0,T);\mathcal{V})$ for $q=\min\{p,2\}$, $u(0)=u_0$ and 
\[u(t)-u_0-\int_0^t \Delta_p(u(s))\,ds=\int_0^t \Phi(s)\,dW_s\]
for all $t\in [0,T]$, a.s. in $\Omega$.  
\end{definition}
Our main results are
\begin{theorem}\label{existence}
For any $\mathcal{F}_0$-measurable initial condition $u_0$ in $L^2(\Omega\times\re^d)$ and arbitrary $1<p<\infty$ there exists a solution to \eqref{stochplaplaceequation} in the sense of Definiton \ref{solution}.
\end{theorem}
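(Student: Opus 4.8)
The plan is to construct a solution by a semi-implicit time discretization (Rothe's method) in the Gelfand triple $\mathcal{V}\hookrightarrow\lz\hookrightarrow\mathcal{V}'$, to derive uniform a priori bounds, to pass to weak limits, and to identify the limit of the nonlinearity by the monotonicity (Minty--Browder) method. A Galerkin scheme is awkward here because $-\Delta_p\colon\mathcal{V}\to\mathcal{V}'$, while monotone, hemicontinuous and bounded, is \emph{not} coercive on $\mathcal{V}$: one only has $\langle-\Delta_p v,v\ranglev=\|\nabla v\|_p^p$, which controls $\nabla v$ but says nothing about $\|v\|_2$. The operator $I-\triangle t\,\Delta_p\colon\mathcal{V}\to\mathcal{V}'$, however, \emph{is} coercive for every $\triangle t>0$. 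So I fix $N\in\na$, put $\triangle t=T/N$, $t_k=k\triangle t$, set $\Delta_k:=\int_{t_k}^{t_{k+1}}\Phi(s)\,dW_s\in\lz$ (which is $\mathcal{F}_{t_{k+1}}$-measurable with $\erw[\Delta_k\mid\mathcal{F}_{t_k}]=0$), let $u_0^N:=u_0$, and define recursively $u_{k+1}^N\in\mathcal{V}$ as the unique minimizer over $\mathcal{V}$ of the strictly convex, coercive, weakly lower semicontinuous functional
\[v\longmapsto\halbe\|v-(u_k^N+\Delta_k)\|_2^2+\frac{\triangle t}{p}\|\nabla v\|_p^p,\]
whose Euler--Lagrange equation is exactly $u_{k+1}^N-u_k^N-\triangle t\,\Delta_p(u_{k+1}^N)=\Delta_k$ in $\mathcal{V}'$. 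Existence and uniqueness of $u_{k+1}^N$ follow from the direct method (equivalently from Browder--Minty); $u_{k+1}^N$ depends measurably on $\omega$ and is $\mathcal{F}_{t_{k+1}}$-measurable. I then introduce the piecewise-constant interpolants $\bar u^N(t):=u_{k+1}^N$ and $u^N_-(t):=u_k^N$ for $t\in(t_k,t_{k+1}]$, the latter being $(\mathcal{F}_t)$-adapted, together with the piecewise-affine interpolant $\hat u^N$ with $\hat u^N(t_k)=u_k^N$.

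The a priori estimates come from a discrete energy balance. Testing the $k$-th equation with $u_{k+1}^N\in\mathcal{V}$ and using $(a-b,a)_{L^2}=\halbe(\|a\|_2^2-\|b\|_2^2+\|a-b\|_2^2)$ gives
\[\halbe\|u_{k+1}^N\|_2^2-\halbe\|u_k^N\|_2^2+\halbe\|u_{k+1}^N-u_k^N\|_2^2+\triangle t\,\|\nabla u_{k+1}^N\|_p^p=(\Delta_k,u_{k+1}^N)_{L^2}.\]
Writing $u_{k+1}^N=u_k^N+(u_{k+1}^N-u_k^N)$, the term $\erw(\Delta_k,u_k^N)_{L^2}$ vanishes by adaptedness, the term $\erw(\Delta_k,u_{k+1}^N-u_k^N)_{L^2}$ is absorbed by Young's inequality at the cost of $\halbe\,\erw\|\Delta_k\|_2^2$, and $\erw\|\Delta_k\|_2^2=\erw\int_{t_k}^{t_{k+1}}\|\Phi(s)\|_{\HS}^2\,ds$ by the It\^o isometry. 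Summing over $k$ gives, uniformly in $N$, bounds on $\sup_m\erw\|u_m^N\|_2^2$, on $\triangle t\sum_k\erw\|\nabla u_k^N\|_p^p=\erw\int_0^T\|\nabla\bar u^N\|_p^p\,ds$, and on $\sum_k\erw\|u_{k+1}^N-u_k^N\|_2^2$. Consequently $\bar u^N$ and $u^N_-$ are bounded in $L^q(\Omega\times(0,T);\mathcal{V})$ with $q=\min\{p,2\}$, the fields $F^N:=-|\nabla\bar u^N|^{p-2}\nabla\bar u^N$ (so that $\Delta_p(\bar u^N)=-\diver F^N$) are bounded in $L^{p'}(\otr)^d$, and the increment bound forces $\bar u^N-u^N_-\to0$ and $\bar u^N-\hat u^N\to0$ in $L^2(\otr)$; hence all interpolants share one limit.

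Passing to a subsequence I obtain $\bar u^N\rightharpoonup u$ in $L^q(\Omega\times(0,T);\mathcal{V})$, $F^N\rightharpoonup G$ in $L^{p'}(\otr)^d$, and $\Delta_p(\bar u^N)=-\diver F^N\rightharpoonup-\diver G=:\chi$ in $L^{p'}(\Omega\times(0,T);\mathcal{V}')$; since $u^N_-\rightharpoonup u$ likewise and the progressively measurable processes form a weakly closed subspace, $u$ has a progressively measurable modification. Telescoping the scheme shows that the $\mathcal{V}'$-valued process $t\mapsto u_0+\int_0^t\Delta_p(\bar u^N(s))\,ds+\int_0^t\Phi(s)\,dW_s$ equals $u_k^N$ at the grid points and differs from $\bar u^N$ by a null sequence, so letting $N\to\infty$ (the Bochner term tending weakly to $\int_0^{\cdot}\chi\,ds$, the stochastic term being fixed) yields
\[u(t)=u_0+\int_0^t\chi(s)\,ds+\int_0^t\Phi(s)\,dW_s\qquad\text{for a.e. }t\in[0,T],\ \text{a.s.}\]
By the It\^o formula for the square of the $\lz$-norm in this Gelfand triple, the right-hand side has an a.s. continuous, square-integrable $\lz$-valued modification, which we take as $u$; in particular $u(0)=u_0$, and
\[\halbe\,\erw\|u(T)\|_2^2=\halbe\,\erw\|u_0\|_2^2+\erw\int_0^T\langle\chi(s),u(s)\ranglev\,ds+\halbe\,\erw\int_0^T\|\Phi(s)\|_{\HS}^2\,ds.\]

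It remains to show $\chi=\Delta_p(u)$, which is the heart of the proof: because $\re^d$ carries no compact Sobolev embedding, Aubin--Lions compactness is unavailable and one must argue by monotonicity. For test processes $v$ in a suitable class (e.g.\ $L^\infty(\Omega\times(0,T);\mathcal{V})$, whose gradients lie in $L^p(\otr)^d$), monotonicity of $-\Delta_p$ gives
\begin{align*}
\erw\int_0^T\langle-\Delta_p(\bar u^N),\bar u^N\ranglev\,ds\ \ge\ &\erw\int_0^T\langle-\Delta_p(\bar u^N),v\ranglev\,ds\\
&{}+\erw\int_0^T\langle-\Delta_p(v),\bar u^N-v\ranglev\,ds,
\end{align*}
and the two terms on the right converge, as $N\to\infty$, to the same expressions with $-\chi$ and $u$ in place of $-\Delta_p(\bar u^N)$ and $\bar u^N$ (these pairings only involve the gradients and the weak limits $G\in L^{p'}$, $\nabla u\in L^p$). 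The decisive point is the energy comparison: summing the discrete energy balance all the way to $k=N$, applying Young's inequality so that the noise contributes exactly $\halbe\,\erw\int_0^T\|\Phi\|_{\HS}^2\,ds$, and using $\erw\|u(T)\|_2^2\le\liminf_N\erw\|u_N^N\|_2^2$ (weak lower semicontinuity, since $u_N^N=\hat u^N(T)\rightharpoonup u(T)$ in $\lz$) gives
\begin{align*}
\limsup_{N\to\infty}\erw\int_0^T\langle-\Delta_p(\bar u^N),\bar u^N\ranglev\,ds&=\limsup_{N\to\infty}\erw\int_0^T\|\nabla\bar u^N\|_p^p\,ds\\
&\le\halbe\,\erw\|u_0\|_2^2-\halbe\,\erw\|u(T)\|_2^2+\halbe\,\erw\int_0^T\|\Phi\|_{\HS}^2\,ds,
\end{align*}
and by the energy identity above the right-hand side equals $\erw\int_0^T\langle-\chi,u\ranglev\,ds$. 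Combining the last displays yields $\erw\int_0^T\langle(-\chi)-(-\Delta_p(v)),u-v\ranglev\,ds\ge0$ for all admissible $v$; the usual Minty step (take $v=u-\lambda w$, divide by $\lambda>0$, let $\lambda\downarrow0$ using hemicontinuity and boundedness of $\Delta_p$, then replace $w$ by $-w$) forces $\chi=\Delta_p(u)$, so $u$ solves \eqref{stochplaplaceequation} in the sense of Definition~\ref{solution}. The main obstacles are precisely the two structural defects on $\re^d$: the non-coercivity of $\Delta_p$, which is what necessitates the time discretization in the first place, and the loss of compact embeddings, which rules out Aubin--Lions and makes the monotonicity argument --- in particular the delicate matching of the discrete and continuous energy balances with the correct constants, and keeping the integrability exponents straight in the cases $p\le2$ and $p>2$ --- the technically involved part.
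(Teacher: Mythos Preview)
Your proposal is correct and follows essentially the same route as the paper: a semi-implicit time discretization in the triple $\mathcal{V}\hookrightarrow\lz\hookrightarrow\mathcal{V}'$ exploiting the coercivity of $I-\triangle t\,\Delta_p$, the discrete energy identity to obtain uniform bounds, weak compactness, and then the Minty trick fed by a comparison of the discrete and continuous energy balances (with $\hat u^N(T)\rightharpoonup u(T)$ and lower semicontinuity of the norm). The only notable variation is that you take the exact stochastic increments $\Delta_k=\int_{t_k}^{t_{k+1}}\Phi\,dW_s$ on the right-hand side, whereas the paper uses $\Phi^k\triangle_{k+1}W$ with a one-step-delayed average $\Phi^k=\tau_N^{-1}\int_{t_{k-1}}^{t_k}\Phi\,ds$ and must then also prove $\Phi_N\to\Phi$ and $M_N,\hat M_N\to M$; your choice is a harmless simplification.
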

\begin{theorem}\label{uniquesolution}
For any $\mathcal{F}_0$-measurable initial condition $u_0$ in $L^2(\Omega\times\re^d)$ for arbitrary $1<p<\infty$ the solution to \eqref{stochplaplaceequation} in the sense of Definiton \ref{solution} is unique.
\end{theorem}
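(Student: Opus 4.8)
The plan is the classical monotonicity-based uniqueness argument applied to the difference of two solutions. Let $u_1,u_2$ be two solutions in the sense of Definition \ref{solution} for the same $\mathcal{F}_0$-measurable datum $u_0$, and set $w:=u_1-u_2$. Subtracting the two identities of Definition \ref{solution}, $w$ is an $(\mathcal{F}_t)_{t\geq0}$-adapted process with a.s.\ continuous $L^2(\re^d)$-paths, $w(0)=0$, and
\[w(t)=\int_0^t\big(\Delta_p(u_1(s))-\Delta_p(u_2(s))\big)\,ds+\int_0^t\big(\Phi_1(s)-\Phi_2(s)\big)\,dW_s,\]
where $\Phi_i$ denotes the noise coefficient associated with $u_i$; thus $\Phi_1-\Phi_2\equiv0$ in the additive case, and $\Phi_i=B(\cdot,u_i)$ in the multiplicative case. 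I would then apply It\^o's formula to $t\mapsto\|w(t)\|_2^2$ in the Gelfand triple $\mathcal{V}\hookrightarrow L^2(\re^d)\hookrightarrow\mathcal{V}'$, exploit the algebraic monotonicity of $-\Delta_p$ together with the Lipschitz bound $(B3)$, and close the estimate with Gronwall's lemma.

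Concretely, It\^o's formula and $w(0)=0$ give, for every $t\in[0,T]$ and a.s.,
\[\|w(t)\|_2^2=2\int_0^t\langle\Delta_p(u_1(s))-\Delta_p(u_2(s)),w(s)\ranglev\,ds+2\int_0^t\big(w(s),(\Phi_1(s)-\Phi_2(s))\,dW_s\big)_{L^2(\re^d)}+\int_0^t\|\Phi_1(s)-\Phi_2(s)\|_{\HS}^2\,ds.\]
Since $u_i(s)\in\mathcal{V}$ for a.e.\ $s$, integration by parts shows the first integrand equals
\[-\int_{\re^d}\big(\nablas{u_1(s)}-\nablas{u_2(s)}\big)\cdot\big(\nabla u_1(s)-\nabla u_2(s)\big)\dx\leq0\]
by the algebraic inequality $(|a|^{p-2}a-|b|^{p-2}b)\cdot(a-b)\geq0$ recalled in the introduction. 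For the last term, in the multiplicative case $(B3)$ (with $b_n(\cdot,0)=0$) gives the pointwise-in-$(\omega,s)$ bound
\[\|\Phi_1(s)-\Phi_2(s)\|_{\HS}^2=\int_{\re^d}\sum_{n=1}^\infty|b_n(\cdot,u_1(s,x))-b_n(\cdot,u_2(s,x))|^2\dx\leq L\|w(s)\|_2^2,\]
while this term is identically $0$ in the additive case.

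It remains to handle the stochastic integral, which is only a local martingale a priori. I would localise by the stopping times $\tau_N:=\inf\{t\in[0,T]:\|u_1(t)\|_2+\|u_2(t)\|_2\geq N\}$, which increase to $T$ a.s.\ by path continuity; the integral stopped at $\tau_N$ is then a true martingale with zero expectation, and $\|w(\cdot\wedge\tau_N)\|_2\leq2N$. Taking expectations at time $t\wedge\tau_N$ leaves
\[\erw\|w(t\wedge\tau_N)\|_2^2\leq L\int_0^t\erw\|w(s\wedge\tau_N)\|_2^2\,ds\qquad(t\in[0,T]),\]
so Gronwall's lemma forces $\erw\|w(t\wedge\tau_N)\|_2^2=0$; letting $N\to\infty$ (Fatou, using $\tau_N\uparrow T$ and path continuity) gives $\erw\|w(t)\|_2^2=0$ for every $t\in[0,T]$, hence $u_1(t)=u_2(t)$ a.s.\ in $\Omega\times\re^d$ for every $t$, and by path continuity the two processes are indistinguishable. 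In the additive case the stochastic integral and the It\^o correction term vanish and the argument reduces to the deterministic chain rule, requiring neither localisation nor Gronwall.

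The step I expect to be the main obstacle is the rigorous justification of It\^o's formula for $\|w(t)\|_2^2$ in this framework: one must know that it holds for the Gelfand triple $\mathcal{V}\hookrightarrow L^2(\re^d)\hookrightarrow\mathcal{V}'$ rather than the textbook $W^{1,p}$-setting, and, crucially, that the drift $\Delta_p(u_1)-\Delta_p(u_2)$ pairs integrably with $w$ over $(0,T)$ for \emph{all} $1<p<\infty$. For $1<p\leq2$ this is immediate ($q=p$, so $u_i\in L^p(\Omega\times(0,T);\mathcal{V})$ and $\Delta_p(u_i)\in L^{p'}(\Omega\times(0,T);\mathcal{V}')$); for $p>2$, where Definition \ref{solution} only yields $u_i\in L^2(\Omega\times(0,T);\mathcal{V})$, one has to invoke the sharper bound $\nabla u_i\in L^p(\Omega\times(0,T)\times\re^d)^d$ furnished by the energy identity for $\|u_i(\cdot)\|_2^2$, which renders $s\mapsto\langle\Delta_p(u_i(s)),w(s)\ranglev$ integrable by H\"older's inequality with exponents $p'$ and $p$. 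Once this is secured, the rest is routine.
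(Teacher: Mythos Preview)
Your argument is correct and, for the additive case, coincides with the paper's proof (Lemma~\ref{uniqueness}): there the stochastic integral cancels and the paper simply tests the difference equation with $u_1-u_2$, invoking the monotonicity inequality~\eqref{technicalineq} exactly as you do.

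For the multiplicative case, however, your route genuinely differs from the paper's. The paper does \emph{not} subtract two solutions and run It\^o--Gronwall; instead it obtains uniqueness as a by-product of the Banach fixed-point argument in Section~3: the solution map $\Pi$ (sending $\rho$ to the unique additive solution with noise $B(\cdot,\rho)$) is shown to be a strict contraction in an exponentially weighted $L^2$-norm, and any solution of the multiplicative problem is automatically a fixed point of $\Pi$ by additive uniqueness. What this buys the paper is economy---existence and uniqueness come from one stroke---at the cost of relying on the full well-posedness machinery for the additive problem. Your direct approach is self-contained for uniqueness and does not need existence at all; the localisation via $\tau_N$ is also a nice touch that the paper does not need because it works with expectations from the start via the contraction estimate.

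On the technical point you flag: the paper is no more careful than you are about justifying the chain rule for $\Vert u_1-u_2\Vert_2^2$ when $p>2$ (Lemma~\ref{uniqueness} just says ``using $u_1-u_2$ as a test function''). Your proposed bootstrap---apply It\^o to each $\Vert u_i\Vert_2^2$ to upgrade $\nabla u_i$ to $L^p(\Omega\times(0,T)\times\re^d)^d$, then use that to justify It\^o for the difference---is the natural fix, but note it is mildly circular as stated: the first application already needs enough integrability of $\Delta_p(u_i)$ in $\mathcal{V}'$. One clean way out is to observe that Definition~\ref{solution} implicitly forces $s\mapsto\Delta_p(u_i(s))$ to be Bochner integrable in $\mathcal{V}'$ (otherwise the defining identity is meaningless), which together with $u_i\in L^2(\Omega;\mathcal C([0,T];L^2(\re^d)))$ is enough to run the It\^o formula of \cite[Theorem~4.2.5]{LiuRock} in the form used in the paper.
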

\subsection{Novelty of the approach}
In the following we will show that the classical well-posedness theory for monotone operators, see, e.g., \cite[Theorem 4.2.4, p.91; Theorem 5.1.3, p.125; Theorems 5.2.2, 5.2.4 and 5.2.6, p.146-148]{LiuRock} and \cite[Theorem 4.20, p.183]{Barbu} and also the more general framework \cite{Ren}, can not be applied to \eqref{stochplaplaceequation} in the framework of the Gelfand triple $\mathcal{V}\hookrightarrow L^2(\mathbb{R}^d)\hookrightarrow \mathcal{V}'$. According to the notation of \cite{Ren}, for $q:=\min(p,2)$, we define $K:=L^q(\Omega\times (0,T);\mathcal{V})$,
\[R:\mathcal{V}\rightarrow[0,\infty),\ v\mapsto R(v):=\left(\|v\|_{L^2(\re^d)}+\|\nabla v\|_{L^p(\re^d)}\right)^q.\]
With straightforward computation one can show that $R$ satisfies condition (K), (i)-(iv) in \cite{Ren}.\\
Moreover it is not hard to see that $A:\mathcal{V}\rightarrow\mathcal{V}'$ defined by $A(v):=\Delta_p(v)$ satisfies the hemicontinuity condition (H1) and the weak monotonicity condition (H2) of \cite{Ren} for all $1<p<\infty$. Now, let us assume that that the coercivity condition (H3)
\begin{align*}
\langle A(v),v\rangle_{\mathcal{V}',\mathcal{V}}+\|\Phi\|_\HS^2\leq c_1\|v\|^2_{L^2(\re^d)}-c_2R(v)\quad\text{for all }v\in \mathcal{V}
\end{align*}
for constants $c_1,c_2>0$ and the boundedness condition (H4) 
\begin{align*}
|\langle A(v),u\rangle_{\mathcal{V}',\mathcal{V}}|\leq c_3(R(v)+R(u))\quad \text{for all }u,v\in\mathcal{V}
\end{align*}
for a constant $c_3>0$ hold true. For any $v\in\mathcal{V}$, plugging $u=v\in\mathcal{V}$ in (H4) we obtain
\begin{align}\label{eqscalingargument}
\|\nabla v\|_{L^p(\re^d)}^p\leq 2c_3\left(\|v\|_{L^2(\re^d)}+\|\nabla v\|_{L^p(\re^d)}\right)^q.
\end{align} 
Let $v\in C_c^\infty(\re^d)$ and $v_\lambda(x):=v(\lambda x)$ for arbitrary $\lambda>0,x\in\re^d$. For $p>\max(2,d)$ (hence $q=2$), plugging $v_{\lambda}$ into  \eqref{eqscalingargument} we get
\begin{align*}
\|\nabla v\|_{L^p(\re^d)}^p&\leq 4 c_3 \lambda^{d-p}\left(\lambda^{-d}\|v\|_{L^2(\re^d)}^2+\lambda^{\frac{2(p-d)}{p}}\|\nabla v\|_{L^p(\re^d)}^2\right)\\
&\leq 4c_3 \left(\lambda ^{-p}+\lambda^{\frac{(d-p)(p-2)}{p}} \right)(\|v\|_{L^2(\re^d)}^2+\|\nabla v\|_{L^p(\re^d)}^2).
\end{align*}
Letting $\lambda\rightarrow\infty$ yields a contradiction since $(d-p)(p-2)<0$ for $p>\max(2,d)$ and therefore the right-hand side in the above equation tends to zero. In the case that $1<p<\frac{2d}{d+2}$ and $d>2$, plugging $v_\lambda$ in (H3) and taking into account that $q=p$ we obtain
\begin{align*}
-c_1\lambda^{-p}\|v\|_{L^2(\re^d)}^2+c_2\left(\lambda^{\frac{d-p}{p}-\frac{d}{2}}\|v\|_{L^2(\re^d)}+\|\nabla v\|_{L^p(\re^d)}\right)^p\leq \|\nabla v\|_{L^p(\re^d)}^p.
\end{align*}
For $\lambda\rightarrow\infty$, $\lambda^{-p}$ tends to zero, but $\lambda^{\frac{d-p}{p}-\frac{d}{2}}$ tends to infinity since $\frac{d-p}{p}-\frac{d}{2}>0$ if and only if $p<\frac{2d}{d+2}$. Therefore, we get a contradiction.\\
In the special case of the continuous embedding  $W^{1,p}(\mathbb{R}^d)\hookrightarrow L^2(\mathbb{R}^d)$ (which is not true for $p>\max(d,2)$ and $1<p<\frac{2d}{d+2}$) it is possible to apply the monotonicity method of \cite{LiuRock} for a regularized problem with an additional term and to show existence of solutions using the vanishing viscosity method. For the sake of completeness, this approach will be presented in the Appendix.\\
In the sequel, we will proof Theorem \ref{existence} and Theorem \ref{uniquesolution} for an additive noise and arbitrary $1<p<\infty$. We will solve time-discretised approximations and pass to the limit to show existence of solutions in the framework of the Gelfand triple $\mathcal{V}\hookrightarrow L^2(\mathbb{R}^d)\hookrightarrow \mathcal{V}'$. With the well-posedness for the additive case at hand, existence and uniqueness for the multiplicative case will be proved with fixed point arguments.

\section{Additive noise}
In this section we consider \eqref{stochplaplaceequation} with $1<p<\infty$ and additive noise: 
\begin{align}\tag{SPDEa}\label{spdeadditiv}
\begin{aligned}
du-\diver(\nablas{u})\,dt&=\Phi\,dW_t &\text{in }\Omega\times(0,T)\times\re^d\\
u(0,\cdot)&=u_0 &\text{in }\Omega\times\re^d,
\end{aligned}
\end{align}
where
\[\Phi:\Omega\times(0,T)\rightarrow \HS(L^2(\re^d))\]
is progressively measurable, and there exists a constant $\mathrm{C}_{\Phi}$ such that
\begin{align*}
\int_0^T\erwb\|\Phi\|_\HS^2\erwe\,dt\leq\mathrm{C}_{\Phi}.
\end{align*}
In particular, $\Phi\in L^2(\Omega;L^2((0,T);\HS(L^2(\re^d))))$. 

\subsection{Proof of Theorem \ref{uniquesolution}}
\begin{lemma}\label{uniqueness}
The solution to \eqref{spdeadditiv} is unique.
\end{lemma}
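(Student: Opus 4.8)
The plan is to prove uniqueness by the standard Itô-calculus argument adapted to the Gelfand triple $\mathcal{V}\hookrightarrow L^2(\re^d)\hookrightarrow\mathcal{V}'$, exploiting that the noise is additive and therefore cancels. Suppose $u_1$ and $u_2$ are two solutions in the sense of Definition \ref{solution} with the same initial datum $u_0$. Set $w:=u_1-u_2$. Subtracting the two integral identities, the stochastic integrals $\int_0^t\Phi\,dW_s$ cancel, so that for all $t\in[0,T]$, a.s.,
\[
w(t)=\int_0^t\big(\Delta_p(u_1(s))-\Delta_p(u_2(s))\big)\,ds
\]
as an identity in $\mathcal{V}'$, and $w\in L^q(\Omega\times(0,T);\mathcal{V})$ with $q=\min\{p,2\}$ has a.s. continuous paths in $L^2(\re^d)$ with $w(0)=0$.

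Next I would apply the Itô formula for the square of the $L^2(\re^d)$-norm along a path $t\mapsto w(t)$ that solves a deterministic (pathwise) equation $w(t)=\int_0^t z(s)\,ds$ with $z\in L^{q'}(0,T;\mathcal{V}')$ and $w\in L^q(0,T;\mathcal{V})$; this is the deterministic special case of the Itô formula in the variational setting (cf. \cite[Theorem 4.2.5]{LiuRock}), and it yields, for a.e. $\omega$ and all $t$,
\[
\halbe\|w(t)\|_2^2=\int_0^t\langle\Delta_p(u_1(s))-\Delta_p(u_2(s)),u_1(s)-u_2(s)\ranglev\,ds.
\]
By the fundamental monotonicity inequality $(|a|^{p-2}a-|b|^{p-2}b)\cdot(a-b)\ge0$ applied pointwise with $a=\nabla u_1(s,x)$, $b=\nabla u_2(s,x)$ and integrated over $\re^d$, the integrand on the right is $\le0$ for a.e.\ $s$. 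Hence $\|w(t)\|_2^2\le0$ for all $t\in[0,T]$, a.s., which forces $w\equiv0$, i.e. $u_1=u_2$ as stochastic processes. A Fubini argument ensures the exceptional $P$-null set can be chosen uniformly in $t$ using path continuity.

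The main obstacle I anticipate is purely technical: justifying the application of the variational Itô formula, since one must check the measurability/integrability hypotheses needed for the pathwise version — namely that for $P$-a.e.\ $\omega$ one has $w(\omega,\cdot)\in L^q(0,T;\mathcal{V})$ and $\Delta_p(u_i(\omega,\cdot))\in L^{q'}(0,T;\mathcal{V}')$ — which requires knowing $\Delta_p$ maps $L^q(0,T;\mathcal{V})$ boundedly into $L^{q'}(0,T;\mathcal{V}')$ (this follows from $\||\nabla v|^{p-2}\nabla v\|_{p'}=\|\nabla v\|_p^{p-1}$ and, for the $p<2$ case, the embedding/interpolation estimates packaged in the definition of a solution). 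A further subtlety specific to $\re^d$ is that $\mathcal{V}'$ is not a space of functions in the usual sense and the duality pairing $\langle\Delta_p(v),v\ranglev$ must be identified with $-\|\nabla v\|_p^p$; this identification for $v\in\mathcal{V}$ is exactly what the Gelfand-triple construction preceding Definition \ref{solution} is designed to give, so I would simply invoke it. Everything else — the cancellation of the noise, the sign of the monotone term, and the Grönwall-free conclusion $\|w(t)\|_2^2\le0$ — is immediate.
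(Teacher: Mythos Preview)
Your proposal is correct and follows essentially the same argument as the paper: subtract the two equations so that the additive noise cancels, apply the It\^{o}/energy identity for $\|u_1-u_2\|_2^2$ (the paper phrases this as ``using $u_1-u_2$ as a test function''), and conclude from the monotonicity inequality \eqref{technicalineq} that $u_1(t)=u_2(t)$ for all $t$. Your discussion of the technical hypotheses needed to justify the energy identity is more explicit than the paper's, but the underlying route is identical.
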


\begin{proof}
Let $u_1$ and $u_2$ be solutions to \eqref{spdeadditiv} with initial value $u_0$, then we have
\begin{align*}
d(u_1-u_2)-\diver(|\nabla u_1|^{p-2}\nabla u_1-|\nabla u_2|^{p-2}\nabla u_2)\,dt=0.
\end{align*}
Using $u_1-u_2$ as a test function in the above equation we get 
\begin{align*}
&\int_{\re^d}|u_1(t)-u_2(t)|^2\,dx\\
&+\int_0^t\int_{\re^d}(|\nabla u_1|^{p-2}\nabla u_1-|\nabla u_2|^{p-2}\nabla u_2)\nabla (u_1-u_2)\,dx\,ds=0
\end{align*}
for any for $t\in(0,T)$. This in combination with the fundamental inequality
\begin{align}\label{technicalineq}
(|\eta|^{p-2}\eta-|\xi|^{p-2}\xi)(\eta-\xi)\geq 0
\end{align}
which holds for all $\eta,\xi\in\re^d$, provides
$u_1(t)=u_2(t)$ for all $t\in[0,T]$, a.e. in $\re^d$, a.s. in $\Omega$.
\end{proof}

\subsection{Proof of Theorem \ref{existence}}

We recall the Gelfand triple
\begin{align*}
\mathcal{V}\hookrightarrow L^2(\re^d)\hookrightarrow \mathcal{V}',
\end{align*}
where $\mathcal{V}=\left\{v\in L^2(\re^d):\nabla v\in L^p(\re^d)^d\right\}$
with the norm $\|v\|_{\mathcal{V}}:=\|v\|_2+\|\nabla v\|_p$ and dual space
$\mathcal{V}'=\left\{f-\diver F:f\in L^2(\re^d),F\in L^{p'}(\re^d)^d\right\}$
as defined in the previous section. 

\subsubsection{The time discretization}
For $N\in\na$, we divide the interval $[0,T]$ in $0=t_0<t_1<\dots<t_N=T$ equidistantly with $\tau_N:=t_{k+1}-t_k=\frac{T}{N}$ for $k\in\{0,\dots,N-1\}$. Moreover, we define $\Phi^0:=0$, $\Phi^k:=\frac{1}{\tau_N}\int_{t_{k-1}}^{t_k}\Phi(s)\,ds$ for $k\in\{1,\dots,N\}$ and $\triangle_{k+1}W:=W_{t_{k+1}}-W_{t_k}$, $k\in\{0,\dots,N-1\}$. For almost every $\omega\in\Omega$, we consider the stationary problem
\begin{align}\tag{TDE}\label{timediscretqu}
u^{k+1}-u^k-\tau_N\diver(\nablas{u^{k+1}})&=\Phi^k\triangle_{k+1}W.
\end{align}
We remark that since it can be shown that all the members of \eqref{timediscretqu} are in $L^2(\mathbb{R}^d)$, \eqref{timediscretqu} can be interpreted as a random equation in $L^2(\mathbb{R}^d)$. 

\begin{proposition} 
For any $N\in\mathbb{N}$, $k\in\{0,\dots,N-1\}$ and any $\mathcal{F}_{t_k}$-measurable random variable $u^k$ with values in $L^2(\mathbb{R}^d)$ there exists a $\mathcal{F}_{t_{k+1}}$-measurable random variable $u^{k+1}\in \mathcal{V}$ satisfying \eqref{timediscretqu}
in $L^2(\mathbb{R}^d)$, a.s. in $\Omega$.
\end{proposition}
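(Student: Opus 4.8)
The plan is to reformulate \eqref{timediscretqu} as an abstract operator equation in the Gelfand triple $\mathcal{V}\hookrightarrow L^2(\re^d)\hookrightarrow\mathcal{V}'$ and to solve it by the classical theory of monotone, coercive operators (the Browder–Minty / Lions surjectivity theorem). Fix $\omega$ outside a null set, write $g:=u^k+\Phi^k\triangle_{k+1}W\in L^2(\re^d)$ (this is $\mathcal{F}_{t_{k+1}}$-measurable as a function of $\omega$), and define the operator
\begin{align*}
A_N:\mathcal{V}&\longrightarrow\mathcal{V}'\\
v&\longmapsto v-\tau_N\diver(|\nabla v|^{p-2}\nabla v),
\end{align*}
so that \eqref{timediscretqu} reads $A_N(u^{k+1})=g$ in $\mathcal{V}'$. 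The first step is to check that $A_N$ is well-defined: the Nemytskii map $v\mapsto|\nabla v|^{p-2}\nabla v$ sends $L^p(\re^d)^d$ into $L^{p'}(\re^d)^d$, so $-\diver(|\nabla v|^{p-2}\nabla v)\in\mathcal{V}'$, and the identity part maps $L^2(\re^d)$ into itself $\subset\mathcal{V}'$; thus $A_N(v)\in\mathcal{V}'$ with the pairing
\[
\langle A_N(v),w\rangle_{\mathcal{V}',\mathcal{V}}=\int_{\re^d}vw\dx+\tau_N\int_{\re^d}|\nabla v|^{p-2}\nabla v\cdot\nabla w\dx.
\]

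The second step is to verify the three structural hypotheses. \emph{Monotonicity:} for $v_1,v_2\in\mathcal{V}$,
\[
\langle A_N(v_1)-A_N(v_2),v_1-v_2\rangle_{\mathcal{V}',\mathcal{V}}=\|v_1-v_2\|_2^2+\tau_N\int_{\re^d}(|\nabla v_1|^{p-2}\nabla v_1-|\nabla v_2|^{p-2}\nabla v_2)\cdot\nabla(v_1-v_2)\dx\ge\|v_1-v_2\|_2^2\ge 0,
\]
using the fundamental algebraic inequality \eqref{technicalineq}; in particular $A_N$ is strictly monotone, which will also give uniqueness of $u^{k+1}$. \emph{Coercivity:} $\langle A_N(v),v\rangle_{\mathcal{V}',\mathcal{V}}=\|v\|_2^2+\tau_N\|\nabla v\|_p^p$. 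Since $\|v\|_{\mathcal{V}}=\|v\|_2+\|\nabla v\|_p$, this controls a superlinear power of $\|v\|_{\mathcal{V}}$; more precisely $\langle A_N(v),v\rangle/\|v\|_{\mathcal{V}}\to\infty$ as $\|v\|_{\mathcal{V}}\to\infty$ (split into the regions where $\|v\|_2$ dominates, giving growth like $\|v\|_{\mathcal{V}}^2$, and where $\|\nabla v\|_p$ dominates, giving growth like $\tau_N\|v\|_{\mathcal{V}}^{p-1}$, and note $p>1$). This is precisely the point where working in $\mathcal{V}$ rather than in $\mathcal{V}$ with only the $p$-Dirichlet energy is essential: the $L^2$-term, inherited from the implicit Euler discretization $u^{k+1}-u^k$, restores coercivity for \emph{all} $1<p<\infty$ and all $d$, compensating the absence of Poincaré's inequality. \emph{Hemicontinuity:} the map $t\mapsto\langle A_N(v+tw),z\rangle_{\mathcal{V}',\mathcal{V}}$ is continuous on $\re$ for all $v,w,z\in\mathcal{V}$, which follows from continuity of the scalar function $a\mapsto|a|^{p-2}a$ on $\re^d$ together with dominated convergence (the integrands are bounded by fixed $L^1$ functions on bounded $t$-intervals by Young's inequality). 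One may also note $A_N$ is bounded (maps bounded sets to bounded sets), which is immediate from the growth estimate $\|A_N(v)\|_{\mathcal{V}'}\lesssim\|v\|_2+\tau_N\|\nabla v\|_p^{p-1}$.

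The third step is to invoke the surjectivity theorem for monotone, coercive, hemicontinuous (equivalently, by boundedness, pseudomonotone or demicontinuous) operators from a reflexive separable Banach space to its dual — $\mathcal{V}$ being reflexive and separable as recalled in the excerpt — to conclude that $A_N$ is surjective; strict monotonicity then gives a unique solution $u^{k+1}=u^{k+1}(\omega)\in\mathcal{V}$ of $A_N(u^{k+1})=g(\omega)$. The final step, and the only genuinely nontrivial measurability point, is to show $\omega\mapsto u^{k+1}(\omega)$ is $\mathcal{F}_{t_{k+1}}$-measurable. Since $g=u^k+\Phi^k\triangle_{k+1}W$ is $\mathcal{F}_{t_{k+1}}$-measurable with values in $L^2(\re^d)$ and $u^{k+1}=(A_N)^{-1}(g)$ with $(A_N)^{-1}:\mathcal{V}'\to\mathcal{V}$, it suffices that $(A_N)^{-1}$ is continuous (or merely Borel measurable) from $\mathcal{V}'$ to $\mathcal{V}$; continuity of the inverse of a bounded, strictly monotone, coercive operator is standard — from $\|v_1-v_2\|_2^2\le\langle A_N(v_1)-A_N(v_2),v_1-v_2\rangle$ and an analogous lower bound on the gradient part via the $p$-dependent strengthening of \eqref{technicalineq} one extracts an estimate $\|(A_N)^{-1}(g_1)-(A_N)^{-1}(g_2)\|_{\mathcal{V}}\to 0$ as $\|g_1-g_2\|_{\mathcal{V}'}\to 0$ on bounded sets. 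Composing a continuous map with an $\mathcal{F}_{t_{k+1}}$-measurable one yields the claim. I expect this measurability/continuity-of-the-resolvent step to be the main obstacle to write out carefully — the existence and uniqueness for fixed $\omega$ being a direct application of the monotone operator theory once the coercivity estimate above is in place.
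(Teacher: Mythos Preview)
Your proposal is correct and follows essentially the paper's own proof: define $A_{\tau_N}(v)=v-\tau_N\Delta_p(v)$ on $\mathcal{V}$, verify hemicontinuity, strict monotonicity, boundedness and coercivity (the paper's coercivity computation is identical to yours), apply Minty--Browder, and then transfer measurability through the inverse. The only point of divergence is the measurability step: the paper shows merely that $A_{\tau_N}^{-1}:\mathcal{V}'\to\mathcal{V}$ is \emph{demi}-continuous (norm-to-weak) and then invokes Pettis' theorem, which is lighter than the norm-to-norm continuity you sketch --- the quantitative lower bound on the gradient part you allude to is not quite ``standard'' for $1<p<2$ (it requires an extra H\"older step and only yields local uniform continuity on bounded sets), so the paper's demi-continuity argument is the cleaner route to the same conclusion.
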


\begin{proof}
For $N\in\mathbb{N}$, we define the operator
\begin{align*}
A_{\tau_N}:\mathcal{V}&\longrightarrow \mathcal{V}'\\
v&\longmapsto v-\tau_N\Delta_p(v).
\end{align*}
It follows immediately that $A_{\tau_N}$ is hemicontinuous. The strict monotonicity of $A_{\tau_N}$ follows from \eqref{technicalineq}. The boundedness of $A_{\tau_N}$ follows directly from Hölder's inequality. 
To show the coercivity of $A_{\tau_N}$, we choose an arbitrary $v\in \mathcal{V}$ and find
\begin{align}\label{200907_01}
\begin{aligned}
\frac{\langle A_{\tau_N}(v),v\ranglev}{\Vert v\Vert_{\mathcal{V}}} &=\frac{\Vert v\Vert_2^2+\tau_N\Vert\nabla v\Vert_p^p}{\Vert v\Vert_2+\Vert\nabla v\Vert_p}\\
&=\Vert v\Vert_2\left(\frac{\Vert v\Vert_2}{\Vert v\Vert_2+\Vert\nabla v\Vert_p}\right)+\tau_N\Vert\nabla v\Vert_p^{p-1}\left(\frac{\Vert \nabla v\Vert_p}{\Vert v\Vert_2+\Vert\nabla v\Vert_p}\right).
\end{aligned}
\end{align}
For $\|v\|_{\mathcal{V}}\rightarrow\infty$, the first or the second term on the right-hand side of \eqref{200907_01} goes to $\infty$ and consequently
\begin{align*}
\lim_{\|v\|_{\mathcal{V}}\rightarrow\infty}\frac{\langle A_{\tau_N}(v),v\ranglev}{\|v\|_{\mathcal{V}}}=\infty.
\end{align*}
Now we can apply the theorem of Minty-Browder (see \cite[Satz 1.5, p.63]{Ruzicka}), that provides, that for each $f\in \mathcal{V}'$ there exists a unique element $v\in \mathcal{V}$ such that $A_{\tau_N}(v)=f$. If we define for a.e. $\omega\in\Omega$
\begin{align*}
f^k:=\Phi^k\triangle_{k+1}W+u^k\in \mathcal{V}',
\end{align*}
there exists $u^{k+1}:\Omega\rightarrow \mathcal{V}$ such that $A_{\tau_N}(u^{k+1})=f^k$, a.s. in $\Omega$, i.e. \eqref{timediscretqu} holds a.e. in $\Omega$.
If we can show that the inverse operator $A_{\tau_N}^{-1}:\mathcal{V}'\rightarrow \mathcal{V}$ is demi-continuous, it follows immediately from the Pettis Theorem (see, \cite{Yosida}, V4, p.131) and from the $\mathcal{F}_{t_k}$-measurability of $u^k$ that $u^{k+1}$ is $\mathcal{F}_{t_{k+1}}$-measurable, since 
\[u^{k+1}=A_{\tau_N}^{-1}\left(\Phi^k\triangle_{k+1}W+u^k\right).\]
This will be shown in the following lemma.
\end{proof}

\begin{lemma}
$A_{\tau_N}^{-1}:\mathcal{V}'\rightarrow \mathcal{V}$ is demi-continuous.
\end{lemma}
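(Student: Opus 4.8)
The plan is to establish demi-continuity of $A_{\tau_N}^{-1}$ by a standard monotone-operator argument: take a sequence $f_n\to f$ strongly in $\mathcal{V}'$, set $v_n:=A_{\tau_N}^{-1}(f_n)$ and $v:=A_{\tau_N}^{-1}(f)$, and show $v_n\rightharpoonup v$ weakly in $\mathcal{V}$. First I would derive a uniform bound on $(v_n)$ in $\mathcal{V}$: pairing $A_{\tau_N}(v_n)=f_n$ with $v_n$ and using the coercivity estimate \eqref{200907_01} already proved, together with $\langle f_n,v_n\ranglev\le \|f_n\|_{\mathcal{V}'}\|v_n\|_{\mathcal{V}}$ and the boundedness of $(f_n)$, yields $\sup_n\|v_n\|_{\mathcal{V}}<\infty$. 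By reflexivity of $\mathcal{V}$, a subsequence satisfies $v_n\rightharpoonup w$ in $\mathcal{V}$ for some $w\in\mathcal{V}$.

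Next I would identify the weak limit $w$ with $v$. Since $A_{\tau_N}$ is bounded, $(A_{\tau_N}(v_n))=(f_n)$ is bounded in $\mathcal{V}'$ and converges strongly to $f$; in particular $A_{\tau_N}(v_n)\rightharpoonup f$. To pass to the limit in the nonlinear term I would use Minty's trick: for arbitrary $z\in\mathcal{V}$, monotonicity gives $\langle A_{\tau_N}(v_n)-A_{\tau_N}(z),\,v_n-z\ranglev\ge 0$. Using $\langle A_{\tau_N}(v_n),v_n\ranglev=\langle f_n,v_n\ranglev\to\langle f,w\ranglev$ (product of strong and weak convergence) and letting $n\to\infty$ in the remaining linear pairings, one obtains $\langle f-A_{\tau_N}(z),\,w-z\ranglev\ge 0$ for all $z\in\mathcal{V}$. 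Taking $z=w\pm\lambda\varphi$ for $\lambda>0$, $\varphi\in\mathcal{V}$, dividing by $\lambda$, and using hemicontinuity of $A_{\tau_N}$ to let $\lambda\downarrow 0$ gives $\langle f-A_{\tau_N}(w),\varphi\ranglev=0$ for all $\varphi$, hence $A_{\tau_N}(w)=f$. By the uniqueness part of Minty--Browder already invoked in the previous proof, $w=v=A_{\tau_N}^{-1}(f)$.

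Finally, since every weakly convergent subsequence of the bounded sequence $(v_n)$ has the same limit $v$, the whole sequence satisfies $v_n=A_{\tau_N}^{-1}(f_n)\rightharpoonup A_{\tau_N}^{-1}(f)$ in $\mathcal{V}$, which is precisely demi-continuity. The main obstacle is the passage to the limit in the nonlinear term $\Delta_p(v_n)$: weak convergence of $v_n$ in $\mathcal{V}$ does not give convergence of $\nabla v_n$ anywhere strong enough to pass to the limit directly, so Minty's monotonicity trick is essential here (on $\re^d$ there is no compact Sobolev embedding to fall back on). Everything else—the a priori bound, the product-of-convergences argument, and the hemicontinuity step—is routine.
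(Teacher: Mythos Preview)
Your proposal is correct and follows essentially the same route as the paper's proof: coercivity yields boundedness of $(v_n)$ in $\mathcal{V}$, reflexivity gives a weakly convergent subsequence, and Minty's trick (monotonicity plus hemicontinuity) together with the strict monotonicity/uniqueness from Minty--Browder identifies the limit as $A_{\tau_N}^{-1}(f)$. The only cosmetic difference is that the paper phrases the boundedness step as a proof by contradiction and writes Minty's trick directly with $z=u-\lambda v$, while you state the subsequence-to-full-sequence argument explicitly; the substance is identical.
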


\begin{proof}
We choose a sequence $(f_n)_{n\in\na}\subseteq \mathcal{V}'$ such that $f_n\rightarrow f$ as $n\rightarrow\infty$ in $\mathcal{V}'$. For $n\in\na$, we define $u_n:=A_{\tau_N}^{-1}(f_n)$. First we remark, that $(u_n)_{n\in\na}$ is bounded in $\mathcal{V}$: 
Assume, that $(u_n)_{n\in\na}$ is not bounded in $\mathcal{V}$, then, passing to a not relabeled subsequence if necessary, from the coercivity of $A_{\tau_N}$ it follows that
\begin{align*}
\lim_{n\rightarrow\infty}\frac{\langle A_{\tau_N}(u_n),u_n\ranglev}{\|u_n\|_{\mathcal{V}}}=\infty.
\end{align*}
On the other hand
\begin{align*}
\frac{\langle A_{\tau_N}(u_n),u_n\ranglev}{\|u_n\|_{\mathcal{V}}}=\frac{\langle f_n,u_n\ranglev}{\|u_n\|_{\mathcal{V}}}\leq\|f_n\|_{\mathcal{V}'}\rightarrow \|f\|_{\mathcal{V}'}\text{ as }n\rightarrow\infty.
\end{align*}
This is a contradiction and therefore $(u_n)_{n\in\na}$ is bounded in $\mathcal{V}$. Since $\mathcal{V}$ is reflexive, there exists a subsequence, still denoted by $(u_n)_{n\in\na}$, and $u\in \mathcal{V}$ such that
\begin{align*}
u_n\rightharpoonup u\text{ as }n\rightarrow\infty\text{ in }\mathcal{V}.
\end{align*}
It remains to show $u=A_{\tau_N}^{-1}(f)$. Since $A_{\tau_N}(u_n)=f_n$ for all $n\in\mathbb{N}$, it follows that $A_{\tau_N}(u_n)\rightarrow f$ in $\mathcal{V}'$ for $n\rightarrow\infty$. We show that $f=A_{\tau_N}(u)$. From the convergence results for $(f_n)_{n\in\mathbb{N}}$ and $(u_n)_{n\in\mathbb{N}}$ respectively, it follows that 
\[\lim_{n\rightarrow\infty}\langle A_{\tau_N}(u_n),u_n-u\rangle_{\mathcal{V}',\mathcal{V}}=0\]
and combining the above equation with the monotonicity of $A_{\tau_N}$ we get
\begin{align*}
\lambda\langle f,v\rangle_{\mathcal{V}',\mathcal{V}}=\lim_{n\rightarrow\infty} \langle A_{\tau_N}(u_n),\lambda v\rangle_{\mathcal{V}',\mathcal{V}}\geq \langle A(u-\lambda v),\lambda v\rangle_{\mathcal{V}',\mathcal{V}}
\end{align*}
for any $v\in \mathcal{V}$ and any $\lambda\in\mathbb{R}$ and the assertion follows using the hemicontinuity of $A_{\tau_N}$.
Therefore 
\begin{align*}
A_{\tau_N}^{-1}(f_n)=u_n\rightharpoonup u=A_{\tau_N}^{-1}(f)\text{ as }n\rightarrow\infty\text{ in }\mathcal{V},
\end{align*}
hence $A_{\tau_N}^{-1}:\mathcal{V}'\rightarrow \mathcal{V}$ is demi-continuous.
\end{proof}

\subsubsection{A priori estimates and convergence results}
\begin{lemma}\label{210521_01}
For an $\mathcal{F}_0$-measurable random variable $u^0=u_0\in L^2(\Omega\times\mathbb{R}^d)$ and $k\in \{0,\ldots,N-1\}$ let $u^{k+1}$ be a solution to \eqref{timediscretqu}. With the convention $t_{-1}:=-\tau_N$ and $\Phi(t)=0$ a.s. in $\Omega$ for all $t<0$ we have
\begin{align}\label{200908_02}
\halbe \erwb\|u^{k+1}\|_2^2-\|u^k\|_2^2\erwe+\tau_N\erwb\|\nabla u^{k+1}\|_p^p\erwe\leq\halbe\mathbb{E}\left[\int_{t_{k-1}}^{t_k}\Vert\Phi(s)\Vert^2_{\operatorname{HS}}\,ds\right]
\end{align}
and, for $n\in\{0,\dots,N-1\}$
\begin{align}\label{energyestimation1}
\begin{split}
\halbe \erwb\|u^{n+1}\|_2^2-\|u_0\|_2^2\erwe+\tau_N\erwb\sum_{k=0}^n\|\nabla u^{k+1}\|_p^p\erwe&\leq\frac{1}{2}\|\Phi\|_{L^2(\Omega\times (0,T);HS(L^2(\mathbb{R}^d)))}^2\\
\end{split}
\end{align}
\end{lemma}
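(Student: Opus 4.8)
The plan is to test the time-discrete equation \eqref{timediscretqu} with $u^{k+1}$ itself and exploit the elementary identity $(a-b,a)_{L^2}=\frac12(\|a\|_2^2-\|b\|_2^2+\|a-b\|_2^2)$ together with the monotonicity structure, then take expectations and sum over $k$. First I would write, for fixed $\omega$ and fixed $k$, the $L^2(\re^d)$-inner product of \eqref{timediscretqu} with $u^{k+1}$:
\begin{align*}
(u^{k+1}-u^k,u^{k+1})_{L^2}+\tau_N\|\nabla u^{k+1}\|_p^p=(\Phi^k\triangle_{k+1}W,u^{k+1})_{L^2},
\end{align*}
where I used that $-\langle\Delta_p(u^{k+1}),u^{k+1}\rangle_{\mathcal{V}',\mathcal{V}}=\|\nabla u^{k+1}\|_p^p$. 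Applying the polarization identity to the first term and dropping the nonnegative $\frac12\|u^{k+1}-u^k\|_2^2$ gives the pathwise bound
\begin{align*}
\halbe\left(\|u^{k+1}\|_2^2-\|u^k\|_2^2\right)+\tau_N\|\nabla u^{k+1}\|_p^p\leq(\Phi^k\triangle_{k+1}W,u^{k+1})_{L^2}.
\end{align*}

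The main obstacle is the stochastic term on the right: I must show $\erwb(\Phi^k\triangle_{k+1}W,u^{k+1})_{L^2}\erwe\leq\halbe\erwb\int_{t_{k-1}}^{t_k}\|\Phi(s)\|_{\HS}^2\,ds\erwe$. The subtlety is that $u^{k+1}$ is $\mathcal{F}_{t_{k+1}}$-measurable, hence correlated with $\triangle_{k+1}W$, so one cannot simply take the expectation of the increment to zero. The trick is to replace $u^{k+1}$ by $u^k$ plus the rest: write $(\Phi^k\triangle_{k+1}W,u^{k+1})_{L^2}=(\Phi^k\triangle_{k+1}W,u^k)_{L^2}+(\Phi^k\triangle_{k+1}W,u^{k+1}-u^k)_{L^2}$. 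Since $\Phi^k$ is $\mathcal{F}_{t_k}$-measurable (it is an average of $\Phi$ over $[t_{k-1},t_k]$ and $\Phi$ is progressively measurable) and $u^k$ is $\mathcal{F}_{t_k}$-measurable while $\triangle_{k+1}W$ is independent of $\mathcal{F}_{t_k}$ with mean zero, the first summand has expectation zero. For the second, from \eqref{timediscretqu} we have $u^{k+1}-u^k=\tau_N\diver(\nablas{u^{k+1}})+\Phi^k\triangle_{k+1}W$, and I would again split: the term pairing $\Phi^k\triangle_{k+1}W$ with $\tau_N\diver(\nablas{u^{k+1}})$ can be absorbed — actually it is cleaner to estimate $(\Phi^k\triangle_{k+1}W,u^{k+1}-u^k)_{L^2}\leq\halbe\|u^{k+1}-u^k\|_2^2+\halbe\|\Phi^k\triangle_{k+1}W\|_2^2$ by Young's inequality and then cancel the $\halbe\|u^{k+1}-u^k\|_2^2$ that we dropped above (so one should keep it rather than discard it). Taking expectations, Itô's isometry for the increment gives $\erwb\|\Phi^k\triangle_{k+1}W\|_2^2\erwe=\tau_N\erwb\|\Phi^k\|_{\HS}^2\erwe$, and Jensen's inequality applied to the average $\Phi^k=\frac{1}{\tau_N}\int_{t_{k-1}}^{t_k}\Phi(s)\,ds$ yields $\tau_N\erwb\|\Phi^k\|_{\HS}^2\erwe\leq\erwb\int_{t_{k-1}}^{t_k}\|\Phi(s)\|_{\HS}^2\,ds\erwe$, which is exactly the right-hand side of \eqref{200908_02}.

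Once \eqref{200908_02} is established, \eqref{energyestimation1} follows by summing over $k=0,\dots,n$: the left-hand side telescopes in the $\|u^{k+1}\|_2^2-\|u^k\|_2^2$ part to $\halbe(\erwb\|u^{n+1}\|_2^2\erwe-\|u_0\|_2^2)$, the gradient terms add up to $\tau_N\erwb\sum_{k=0}^n\|\nabla u^{k+1}\|_p^p\erwe$, and on the right the intervals $[t_{k-1},t_k]$ for $k=0,\dots,n$ are pairwise disjoint subsets of $(0,T)$ (the $k=0$ interval being $[-\tau_N,0)$ where $\Phi\equiv0$ by the stated convention), so $\halbe\sum_{k=0}^n\erwb\int_{t_{k-1}}^{t_k}\|\Phi(s)\|_{\HS}^2\,ds\erwe\leq\halbe\erwb\int_0^T\|\Phi(s)\|_{\HS}^2\,ds\erwe=\halbe\|\Phi\|_{L^2(\Omega\times(0,T);\HS(L^2(\re^d)))}^2$. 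I would remark that all manipulations are justified because each $u^{k+1}$ lies in $\mathcal{V}\subset L^2(\re^d)$ and, by the a priori bound obtained inductively, in $L^2(\Omega;L^2(\re^d))$ with $\nabla u^{k+1}\in L^p(\Omega\times\re^d)^d$, so every expectation appearing is finite; the only genuinely delicate point is the measurability bookkeeping for the stochastic term described above.
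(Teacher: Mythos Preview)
Your proposal is correct and follows essentially the same approach as the paper: test \eqref{timediscretqu} with $u^{k+1}$, use the polarization identity keeping the term $\tfrac12\|u^{k+1}-u^k\|_2^2$, split the stochastic term as $(\Phi^k\triangle_{k+1}W,u^k)+(\Phi^k\triangle_{k+1}W,u^{k+1}-u^k)$, kill the first summand in expectation by $\mathcal{F}_{t_k}$-measurability and independence, bound the second by Young so that $\tfrac12\|u^{k+1}-u^k\|_2^2$ cancels, and finish with the It\^{o} isometry plus Jensen on the average $\Phi^k$. The telescoping sum for \eqref{energyestimation1} is identical to the paper's.
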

\begin{proof}
Since \eqref{energyestimation1} follows from \eqref{200908_02} by summing from $0$ to $n$, we have to show \eqref{200908_02}. For $k\in\{0,\dots,N-1\}$, we use $u^{k+1}$ as a test function in \eqref{timediscretqu} and obtain
\begin{align}\label{testtdeukp1}
\int_{\re^d}(u^{k+1}-u^k)u^{k+1}\,dx+\tau_N\|\nabla u^{k+1}\|_p^p=\int_{\re^d}\Phi^k\triangle_{k+1}Wu^{k+1}\,dx\text{ a.e. in }\Omega.
\end{align}
We study the different terms separately. It is obvious that
\begin{align*}
\int_{\re^d}(u^{k+1}-u^k)u^{k+1}\,dx&=\|u^{k+1}\|_2^2-\int_{\re^d}u^{k}u^{k+1}\,dx\\
&=\halbe\|u^{k+1}\|_2^2-\halbe\|u^{k}\|_2^2+\halbe\|u^{k+1}-u^k\|_2^2\text{ a.e. in }\Omega.
\end{align*}
On the right side of \eqref{testtdeukp1} we write $\triangle_{k+1}Wu^{k+1}=\triangle_{k+1}Wu^{k}+\triangle_{k+1}W(u^{k+1}-u^k)$ and recall that
\begin{align*}
\erwb\int_{\re^d}\Phi^k\triangle_{k+1}Wu^k\,dx\erwe&=\erwb\erwb\int_{\re^d}\Phi^k\triangle_{k+1}Wu^k\,dx\erwe\bigg|\mathcal{F}_{t_k}\erwe\\
&=\erwb\int_{\re^d}\Phi^k u^k\erwb \triangle_{k+1}W|\mathcal{F}_{t_k}\erwe\,dx\erwe\\
&=0,
\end{align*}
because $\Phi^k$ and $u^k$ are $\mathcal{F}_{t_k}$-measurable and $\triangle_{k+1}W$ is independent from $\mathcal{F}_{t_k}$. Moreover we get by Hölder's inequality and Young's inequality
\begin{align*}
&\erwb\int_{\re^d}\Phi^k\triangle_{k+1}W(u^{k+1}-u^k)\,dx\erwe\leq\erwb\|\Phi^k\triangle_{k+1}W\|_2\|u^{k+1}-u^k\|_2\erwe\\
&\leq\erwb\halbe\|\Phi^k\triangle_{k+1}W\|_2^2\erwe+\erwb\halbe\|u^{k+1}-u^k\|_2^2\erwe.
\end{align*}
From the It\^{o} isometry it follows that
\begin{align*}
\erwb\|\Phi^k\triangle_{k+1}W\|_2^2\erwe&=\erwb\left\|\int_{t_k}^{t_{k+1}}\Phi^k\,dW_s\right\|_2^2\erwe=\erwb\int_{t_k}^{t_{k+1}}\|\Phi^k\|_\HS^2\,ds\erwe\\
&=\erwb\tau_N\|\Phi^k\|_\HS^2\erwe\leq \mathbb{E}\left[\int_{t_{k-1}}^{t_k}\Vert\Phi(s)\Vert^2_{\operatorname{HS}}\,ds\right].
\end{align*}
Together we get
\begin{align*}
&\erwb\int_{\re^d}\Phi^k\triangle_{k+1}W(u^{k+1}-u^k)\,dx\erwe\\
&\leq\halbe\mathbb{E}\left[\int_{t_{k-1}}^{t_k}\Vert\Phi(s)\Vert^2_{\operatorname{HS}}\,ds\right]+\halbe\erwb\|u^{k+1}-u^k\|_2^2\erwe.
\end{align*}
Plugging the above inequalities into \eqref{testtdeukp1} we get
\begin{align*}
&\halbe \erwb \|u^{k+1}\|_2^2-\|u^k\|_2^2\erwe+\halbe\erwb\|u^{k+1}-u^k\|_2^2\erwe+\tau_N\erwb\|\nabla u^{k+1}\|_p^p\erwe\\
&=\erwb\int_{\re^d}\Phi^k\triangle_{k+1}W(u^{k+1}-u^k)\,dx\erwe-\erwb\int_{\re^d}\Phi^k\triangle_{k+1}W u^k\,dx\erwe\\
&\leq\halbe\mathbb{E}\left[\int_{t_{k-1}}^{t_k}\Vert\Phi(s)\Vert^2_{\operatorname{HS}}\,ds\right]+\halbe\erwb\|u^{k+1}-u^k\|_2^2\erwe.
\end{align*}
This yields the assertions.
\end{proof}
In the following, we want to integrate \eqref{timediscretqu} over $\Omega$ and $(0,T)$ and pass to the limit with the discretization parameter $N\rightarrow\infty$. To do so, we will introduce some notation and state some convergence results.
\begin{definition}
For $N\in\na$ and $t\in[0,T]$ we define
\begin{align*}
u_N^r(t)&:=\sum_{k=0}^{N-1}u^{k+1}\chi_{[t_k,t_{k+1})}(t),\ u_N^r(T)=u^N\\
u_N^l(t)&:=\sum_{k=0}^{N-1}u^{k}\chi_{(t_k,t_{k+1}]}(t),\ u_N^l(0)=u_0\\
\hat{u}_N(t)&:=\sum_{k=0}^{N-1}\left(\frac{u^{k+1}-u^k}{\tau_N}(t-t_k)+u^k\right)\chitk,\ \hat{u}_N(T)=u^N\\
\Phi_N(t)&:=\sum_{k=0}^{N-1}\Phi^k\chitk(t),\ \Phi_N(T)=\Phi^N
\end{align*}
and we recall that $\Phi^0=0$, $\Phi^k=\frac{1}{\tau_N}\int_{t_{k-1}}^{t_k}\Phi(s)\,ds$ for $k\in\{1,\dots,N\}$.
\begin{align*}
M_N(t)&:=\int_0^t\Phi_N\,dW_s\\
\hat{M}_N(t)&:=\sum_{k=0}^{N-1}\left(\frac{M_N(t_{k+1})-M_N(t_k)}{\tau_N}(t-t_k)+M_N(t_k)\right)\chitk,\\ 
\hat{M}_N(T)&=M_N(T).
\end{align*}
\end{definition}
From \cite[Lemma 12, p.52]{Simon07} it follows that
\[\Phi_N\rightarrow\Phi \ \text{for} \ N\rightarrow\infty \ \text{in} \ L^2(\Omega\times (0,T);\operatorname{HS}(L^2(\mathbb{R}^d))).\] 
In the following, we will investigate the boundedness and convergence of some of these functions interpreted as sequences for $N\in\na$.

\begin{lemma}\label{boundednesses}
There holds
\begin{itemize}
\item[(i)] $(u_N^r)_{N\in\na}$ and $(u_N^l)_{N\in\na}$ are bounded in $L^2(\Omega\times(0,T);L^2(\re^d))$.
\item[(ii)] $(\nabla u_N^r)_{N\in\na}$ 
is bounded in $L^p(\Omega\times (0,T); L^p(\re^d)^d)$.
\item[(iii)] $(\hat{u}_N)_{N\in\na}$ is bounded in $L^2(\Omega\times(0,T);L^2(\re^d))$.
\end{itemize}
In particular, passing to a not relabeled subsequence if necessary, for $N\rightarrow\infty$, 
\begin{itemize}
\item $(u_N^l)_{N\in\na}$ and $(\hat{u}_N)_{N\in\na}$ converge weakly in $L^2(\Omega\times(0,T);L^2(\re^d))$,
\item $(u_N^r)_{N\in\na}$ converges weakly in $L^2(\Omega\times (0,T);L^2(\mathbb{R}^d))$ and $(\nabla u_N^r)_{N\in\na}$ converges weakly in $L^p(\Omega\times (0,T);L^p(\mathbb{R}^d)^d)$, thus $(u_N^r)_{N\in\na}$ converges weakly in $L^q(\Omega\times (0,T);\mathcal{V})$ with $q:=\min\{p,2\}$
\end{itemize} 
\end{lemma}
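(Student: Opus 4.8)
The plan is to derive the stated boundedness from the a priori estimate \eqref{energyestimation1} of Lemma \ref{210521_01}, and then extract weakly convergent subsequences by reflexivity. First I would prove (i) and (ii). For the piecewise-constant interpolants $u_N^r$ and $\nabla u_N^r$, observe that by definition of the $L^2(\Omega\times(0,T);L^2(\re^d))$- and $L^p(\Omega\times(0,T);L^p(\re^d)^d)$-norms and the equidistant partition,
\[
\|u_N^r\|_{L^2(\Omega\times(0,T);L^2(\re^d))}^2=\tau_N\sum_{k=0}^{N-1}\erwb\|u^{k+1}\|_2^2\erwe,\qquad
\|\nabla u_N^r\|_{L^p(\Omega\times(0,T);L^p(\re^d)^d)}^p=\tau_N\sum_{k=0}^{N-1}\erwb\|\nabla u^{k+1}\|_p^p\erwe.
\]
Taking $n=N-1$ in \eqref{energyestimation1} bounds $\tau_N\erwb\sum_{k=0}^{N-1}\|\nabla u^{k+1}\|_p^p\erwe$ by $\tfrac12\|\Phi\|_{L^2(\Omega\times(0,T);\HS(L^2(\re^d)))}^2$, which gives (ii) immediately with a constant independent of $N$; it also bounds $\erwb\|u^{N}\|_2^2\erwe$ and, applying \eqref{energyestimation1} with each $n\in\{0,\dots,N-1\}$, bounds $\max_{0\le k\le N}\erwb\|u^{k}\|_2^2\erwe$ uniformly in $N$ by $\|u_0\|_2^2+\|\Phi\|_{L^2(\Omega\times(0,T);\HS(L^2(\re^d)))}^2$. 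Hence $\tau_N\sum_{k=0}^{N-1}\erwb\|u^{k+1}\|_2^2\erwe\le T\cdot\max_k\erwb\|u^{k+1}\|_2^2\erwe$ is bounded, proving the $u_N^r$ part of (i); the argument for $u_N^l$ is identical (it uses $u^0,\dots,u^{N-1}$).

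Next I would prove (iii). The piecewise-affine interpolant $\hat u_N(t)$ on each subinterval $[t_k,t_{k+1})$ is a convex combination of $u^k$ and $u^{k+1}$, so $\|\hat u_N(t)\|_2\le\max\{\|u^k\|_2,\|u^{k+1}\|_2\}$ pointwise in $\Omega$, and therefore $\|\hat u_N\|_{L^2(\Omega\times(0,T);L^2(\re^d))}^2\le T\max_{0\le k\le N}\erwb\|u^k\|_2^2\erwe$, which is bounded uniformly in $N$ by the estimate from the previous paragraph. This gives (iii).

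Finally, for the convergence assertions: $L^2(\Omega\times(0,T);L^2(\re^d))$ and $L^p(\Omega\times(0,T);L^p(\re^d)^d)$ are reflexive (as $1<p<\infty$), so from the bounds in (i), (ii), (iii), by the Banach–Alaoglu/Eberlein–Šmulian theorem, along a common subsequence we get $u_N^l\rightharpoonup\bar u$, $\hat u_N\rightharpoonup\hat u$ and $u_N^r\rightharpoonup u$ weakly in $L^2(\Omega\times(0,T);L^2(\re^d))$, and $\nabla u_N^r\rightharpoonup G$ weakly in $L^p(\Omega\times(0,T);L^p(\re^d)^d)$. Since the distributional gradient is a closed operator with respect to weak convergence, $G=\nabla u$, so $u\in L^q(\Omega\times(0,T);\mathcal{V})$ with $q=\min\{p,2\}$ and $u_N^r\rightharpoonup u$ in that space (weak convergence of both $u_N^r$ in $L^2$ and $\nabla u_N^r$ in $L^p$ against the corresponding duality pairings gives weak convergence in $L^q(\Omega\times(0,T);\mathcal V)$, whose dual pairs a component in $L^2$ with one in $L^{p'}$). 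There is no real obstacle here; the only point requiring a little care is bookkeeping the constants in \eqref{energyestimation1} to get a bound on $\max_k\erwb\|u^k\|_2^2\erwe$ rather than merely on the sum, and making sure all extractions are along one common subsequence.
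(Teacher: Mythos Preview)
Your proof is correct and follows essentially the same route as the paper: both reduce (i)--(iii) to the discrete energy estimate \eqref{energyestimation1} (via the uniform bound $\max_k\erwb\|u^k\|_2^2\erwe\le\erwb\|u_0\|_2^2\erwe+\|\Phi\|_{L^2(\Omega\times(0,T);\HS)}^2$) and then extract weak subsequential limits by reflexivity. The only cosmetic difference is in (iii), where the paper uses the triangle inequality to get $\|\hat u_N(t)\|_2\le\|u^{k+1}\|_2+2\|u^k\|_2$ rather than your convexity argument; note, however, that your pointwise bound $\|\hat u_N(t)\|_2^2\le\max\{\|u^k\|_2^2,\|u^{k+1}\|_2^2\}$ does not directly yield $\erwb\|\hat u_N(t)\|_2^2\erwe\le\max_k\erwb\|u^k\|_2^2\erwe$ since expectation and $\max$ do not commute---you pick up a harmless factor of $2$ (bound $\max\{a,b\}\le a+b$ before taking $\erw$).
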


\begin{proof}
(i): At first we see, that there holds for $N\in\na$
\begin{align*}
\erwb\int_0^T\|u_N^r(t)\|_2^2\,dt\erwe&=\erwb\sum_{n=0}^{N-1}\int_{t_n}^{t_{n+1}}\left\|\sum_{k=0}^{N-1}u^{k+1}\chitk(t)\right\|_2^2\,dt\erwe\\
&=\tau_N\sum_{n=0}^{N-1}\erwb\|u^{n+1}\|_2^2\erwe\\
\end{align*}
Using \eqref{energyestimation1}, we obtain
\begin{align*}
\erwb\int_0^T\|u_N^r(t)\|_2^2\,dt\erwe&\leq T\left(\erwb \|u_0\|_2^2\erwe +\Vert \Phi\Vert^2_{L^2(\Omega\times (0,T);HS(L^2(\mathbb{R}^d)))}\right).
\end{align*}
With the same arguments it is possible to show that $(u_N^l)_{N\in\na}$ is bounded in $L^2(\otwr{2})$.\\
(ii): By the definition of $u_N^r$ we know that
$$\nabla u_N^r=\sum_{k=0}^{N-1}\nabla u^{k+1}\chitk.$$
Applying \eqref{energyestimation1}, we obtain for $N\in\na$
\begin{align*}
\erwb\int_0^T\|\nabla u_N^r(t)\|_p^p\,dt\erwe
&=\tau_N\erwb\sum_{n=0}^{N-1}\|\nabla u^{n+1}\|_p^p\erwe\\
&\leq\frac{T}{2}\Vert \Phi\Vert^2_{L^2(\Omega\times (0,T);HS(L^2(\mathbb{R}^d)))}+\halbe\erwb\|u_0\|_2^2\erwe.
\end{align*}
(iii): For $N\in\na$ there holds
\begin{align*}
\erwb\int_0^T\|\hat{u}_N(t)\|_2^2\,dt\erwe
&=\erwb\sum_{n=0}^{N-1}\int_{t_n}^{t_{n+1}}\left\|\frac{u^{n+1}-u^n}{\tau_N}(t-t_n)+u^n\right\|_2^2\,dt\erwe\\
&\leq\erwb\sum_{n=0}^{N-1}\int_{t_n}^{t_{n+1}}\left(\frac{|t-t_n|}{\tau_N}\|u^{n+1}-u^n\|_2+\|u^n\|_2\right)^2\,dt\erwe\\
&\leq\erwb\sum_{n=0}^{N-1}\tau_N(\|u^{n+1}\|_2+2\|u^n\|_2)^2\,dt\erwe\\
&\leq 4\tau_N\erwb\sum_{n=0}^{N-1}\|u^{n+1}\|_2^2+4\|u^n\|_2^2\erwe.
\end{align*}
From \eqref{energyestimation1} it follows that $\sum_{n=0}^{N-1}\erwb\|u^{n+1}\|_2^2\erwe$ is bounded with a constant independent from $N$, and therefore the above expression is also bounded.
\end{proof}

Lemma \ref{boundednesses} implies that $(u_N^r)_{N\in\mathbb{N}}$ and $(u_N^l)_{N\in\mathbb{N}}$ converge weakly in $L^2(\Omega\times(0,T);L^2(\re^d))$ and $(\hat{u}_N)_{N\in\mathbb{N}}$ converges weakly in $L^2(\Omega\times(0,T);L^2(\re^d))$ as $N\rightarrow\infty$, passing to not relabeled subsequences if necessary. It would be desirable that the functions converge to the same limit.

\begin{lemma}
Passing to not relabeled subsequences if necessary, the sequences $(u_N^r)_{N\in\mathbb{N}}$, $(u_N^l)_{N\in\mathbb{N}}$ and $(\hat{u}_N)_{N\in\mathbb{N}}$ converge weakly in $L^2(\Omega\times(0,T);L^2(\re^d))$ to the same limit $u\in L^2(\Omega\times (0,T);L^2(\mathbb{R}^d))\cap L^q(\Omega\times (0,T);\mathcal{V})$ with $q=\min\{2,p\}$.
\end{lemma}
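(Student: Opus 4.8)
The plan is to fix one subsequence along which all three families converge weakly and then to identify the three limits; the key point is that $\hat{u}_N$ differs from both $u_N^r$ and $u_N^l$ by a piecewise-affine defect which, although not necessarily small in $L^2$, tends to $0$ in the weaker space $L^1(\Omega\times(0,T);\mathcal{V}')$, and this is enough to match the weak $L^2$-limits.

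First I would use Lemma \ref{boundednesses} and reflexivity to pass to a not relabeled subsequence along which $u_N^r\rightharpoonup u$ weakly in $L^2(\Omega\times(0,T);L^2(\re^d))$ and in $L^q(\Omega\times(0,T);\mathcal{V})$, $q=\min\{2,p\}$ (the two weak limits coincide, e.g.\ by testing against $L^\infty(\Omega\times(0,T);L^2(\re^d))$), and $u_N^l\rightharpoonup v$, $\hat{u}_N\rightharpoonup w$ weakly in $L^2(\Omega\times(0,T);L^2(\re^d))$; by Lemma \ref{boundednesses} again $u\in L^q(\Omega\times(0,T);\mathcal{V})$, the weak $L^p$-limit of $\nabla u_N^r$ being $\nabla u$ since the distributional gradient is continuous under weak convergence. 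It then remains to prove $u=v=w$.

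Next I would exploit \eqref{timediscretqu}: on $[t_k,t_{k+1})$ one has $\hat{u}_N(t)-u_N^r(t)=-\tfrac{t_{k+1}-t}{\tau_N}(u^{k+1}-u^k)$ and $\hat{u}_N(t)-u_N^l(t)=\tfrac{t-t_k}{\tau_N}(u^{k+1}-u^k)$, while \eqref{timediscretqu} gives $u^{k+1}-u^k=\tau_N\diver(\nablas{u^{k+1}})+\Phi^k\triangle_{k+1}W$, so that, using $L^2(\re^d)\hookrightarrow\mathcal{V}'$ and the description of $\mathcal{V}'$, $\|u^{k+1}-u^k\|_{\mathcal{V}'}\le\tau_N\|\nabla u^{k+1}\|_p^{p-1}+\|\Phi^k\triangle_{k+1}W\|_2$. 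Each of the two affine weights integrates to $\tau_N/2$ over $(t_k,t_{k+1})$, so summing in $k$ and taking expectations,
\[\erwb\int_0^T\|\hat{u}_N(t)-u_N^r(t)\|_{\mathcal{V}'}\,dt\erwe\le\frac{\tau_N}{2}\left(\tau_N\sum_{k=0}^{N-1}\erwb\|\nabla u^{k+1}\|_p^{p-1}\erwe+\sum_{k=0}^{N-1}\erwb\|\Phi^k\triangle_{k+1}W\|_2\erwe\right),\]
and likewise with $u_N^r$ replaced by $u_N^l$. By Hölder's inequality in $k$ (exponents $\tfrac{p}{p-1}$ and $p$) together with \eqref{energyestimation1}, $\tau_N\sum_k\erw\|\nabla u^{k+1}\|_p^{p-1}$ is bounded uniformly in $N$; by the It\^o isometry and Cauchy--Schwarz in $k$, $\sum_k\erw\|\Phi^k\triangle_{k+1}W\|_2\le N^{1/2}\big(\sum_k\tau_N\erw\|\Phi^k\|_\HS^2\big)^{1/2}=O(\tau_N^{-1/2})$. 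Since $\tau_N=T/N\to 0$, the right-hand side tends to $0$, hence $\hat{u}_N-u_N^r\to 0$ and $\hat{u}_N-u_N^l\to 0$ in $L^1(\Omega\times(0,T);\mathcal{V}')$.

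Finally I would test against $\varphi$ of the form $\sum_{i=1}^m a_i\psi_i$ with $a_i\in L^\infty(\Omega\times(0,T))$ and $\psi_i\in C_c^\infty(\re^d)$; these lie in $L^\infty(\Omega\times(0,T);\mathcal{V})$ and are dense in $L^2(\Omega\times(0,T);L^2(\re^d))$, and for them the $L^2$-pairing coincides with the $\mathcal{V}'$--$\mathcal{V}$ pairing. Thus
\[\left|\erwb\int_0^T\big(\hat{u}_N(t)-u_N^r(t),\varphi(t)\big)_{L^2(\re^d)}\,dt\erwe\right|\le\|\varphi\|_{L^\infty(\Omega\times(0,T);\mathcal{V})}\,\erwb\int_0^T\|\hat{u}_N(t)-u_N^r(t)\|_{\mathcal{V}'}\,dt\erwe\longrightarrow 0,\]
while the left-hand side converges to $\erw\int_0^T(w-u,\varphi)_{L^2(\re^d)}\,dt$ by the weak $L^2$-convergences above; hence $(w-u,\varphi)_{L^2(\re^d)}=0$ for a dense set of $\varphi$, i.e.\ $w=u$, and the same argument with $u_N^l$ gives $w=v$, so $u=v=w\in L^2(\Omega\times(0,T);L^2(\re^d))\cap L^q(\Omega\times(0,T);\mathcal{V})$. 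The main obstacle is exactly that the a priori estimates control the increments $u^{k+1}-u^k$ only in $\mathcal{V}'$ (through the equation) and not in $L^2(\re^d)$, so the defect cannot be shown to vanish in $L^2$ and one is forced to argue in the dual space and then recover the weak $L^2$-limits by density; alternatively, re-running the computation in the proof of Lemma \ref{210521_01} with Young's inequality applied with a small parameter gives $\sum_{k}\erw\|u^{k+1}-u^k\|_2^2\le C$ uniformly in $N$, and then the defect tends to $0$ even strongly in $L^2(\Omega\times(0,T);L^2(\re^d))$, which shortens the last step.
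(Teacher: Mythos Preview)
Your proof is correct, but the route you take is more circuitous than the paper's, and in fact the ``alternative'' you mention in your final sentence \emph{is} the paper's argument. The paper does not pass through $\mathcal{V}'$ at all: it re-runs the energy computation \eqref{testtdeukp1} with Young's inequality split as $ab\le 2a^2+\tfrac{1}{8}b^2$ (instead of $\tfrac{1}{2}a^2+\tfrac{1}{2}b^2$) so that only $\tfrac{1}{8}\erwb\|u^{k+1}-u^k\|_2^2\erwe$ appears on the right, leaving $\tfrac{3}{8}\erwb\|u^{k+1}-u^k\|_2^2\erwe$ after subtraction. Summing in $k$ gives $\sum_k\erwb\|u^{k+1}-u^k\|_2^2\erwe\le C$ uniformly in $N$, and then $\erwb\int_0^T\|u_N^r-u_N^l\|_2^2\,dt\erwe\le \tau_N C\to 0$ and likewise $\erwb\int_0^T\|\hat{u}_N-u_N^l\|_2^2\,dt\erwe\le \tau_N C\to 0$, so the differences vanish \emph{strongly} in $L^2(\Omega\times(0,T);L^2(\re^d))$ and the weak limits coincide immediately.

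Your $\mathcal{V}'$ detour works, but the ``main obstacle'' you identify is not real: the a priori estimate \emph{does} control the increments in $L^2(\re^d)$, one just has to absorb the $\|u^{k+1}-u^k\|_2^2$ term on the right into the left by choosing the Young constant appropriately, which the original proof of Lemma~\ref{210521_01} happened not to do because it was not needed there. Your argument via the equation, the $\mathcal{V}'$-norm and density of $L^\infty(\Omega\times(0,T);\mathcal{V})$-functions is a valid substitute, and would be genuinely useful in situations where no $L^2$ control on the increments is available; here, however, it is unnecessary.
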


\begin{proof}
The argumentation is based on the estimate
\begin{align}\label{sumukp1-uk}
\erwb\sum_{k=0}^{N-1}\|u^{k+1}-u^k\|_2^2\erwe\leq C
\end{align}
for a constant $C\geq 0$ not depending on $N\in\mathbb{N}$. This can be derived from \eqref{testtdeukp1}. To obtain the desired result, we estimate the right-hand side of \eqref{testtdeukp1} using H\"older and Young inequalities in the following way:
\begin{align*}
&\erwb \int_{\re^d}\Phi^k\triangle_{k+1}W(u^{k+1}-u^k)\,dx\erwe\leq\erwb\|\Phi^k\triangle_{k+1}W\|_2\|u^{k+1}-u^k\|_2\erwe\\
&\leq\halbe \erwb 4\|\Phi^k\triangle_{k+1}W\|_2^2\erwe+\halbe\erwb\frac{1}{4}\|u^{k+1}-u^k\|_2^2\erwe.
\end{align*}
Therefore,
\begin{align*}
\frac{3}{8}\erwb\|u^{k+1}-u^k\|_2^2\erwe&\leq 2\erwb\|\Phi^k\triangle_{k+1}W\|_2^2\erwe+\halbe\erwb\|u^k\|_2^2\erwe-\halbe\erwb\|u^{k+1}\|_2^2\erwe\\
&\leq 2\mathbb{E}\left[\int_{t_{k-1}}^{t_k}\Vert\Phi(s)\Vert^2_{\operatorname{HS}}\,ds\right]+\halbe\erwb\|u^k\|_2^2\erwe-\halbe\erwb\|u^{k+1}\|_2^2\erwe.
\end{align*}
Summing the above inequalities over $k=0,\ldots,N-1$ we get
\begin{align*}
\erwb\sum_{k=0}^{N-1}\|u^{k+1}-u^k\|_2^2\erwe&\leq\frac{16}{3}\mathbb{E}\left[\int_{0}^{T}\Vert\Phi(s)\Vert^2_{\operatorname{HS}}\,ds\right]+\frac{4}{3}\erwb\|u_0\|_2^2\erwe=:C.
\end{align*}
Thus, for $N\in\na$, we have
\begin{align*}
\erwb\int_0^T\|u_N^r(t)-u_N^l(t)\|_2^2\,dt\erwe&=\erwb\sum_{k=0}^{N-1}\int_{t_k}^{t_{k+1}}\|u^{k+1}-u^k\|_2^2\,dt\erwe\\
&\leq\tau_N \erwb\sum_{k=0}^{N-1}\|u^{k+1}-u^k\|_2^2\erwe\\
&\leq\tau_N C
\end{align*}
Hence, $\lim_{N\rightarrow\infty}\erwb\int_0^T\|u_N^r(t)-u_N^l(t)\|_2^2\,dt\erwe=0$ and from the weak lower semicontinuity of the norm it follows that $(u_N^l)_{N\in\mathbb{N}}$ and $(u_N^r)_{N\in\mathbb{N}}$ converge weakly to the same limit in $u\in L^2(\Omega\times(0,T);L^2(\re^d))$ as $N\rightarrow\infty$. With the additional information on the weak convergence of $(u_N^r)_{N\in\mathbb{N}}$ in $L^q(\Omega\times (0,T);\mathcal{V})$ (see Lemma \ref{boundednesses}), it follows that $u$ is also in $L^q(\Omega\times (0,T);\mathcal{V})$. Furthermore,
\begin{align*}
\erwb\int_0^T\|\hat{u}_N(t)-u_N^l(t)\|_2^2\,dt\erwe&=\erwb\sum_{k=0}^{N-1}\int_{t_k}^{t_{k+1}}\left\|\frac{u^{k+1}-u^k}{\tau_N}(t-t_k)\right\|_2^2\,dt\erwe\\
&\leq \tau_N\erwb\sum_{k=0}^{N-1}\|u^{k+1}-u^k\|_2^2\erwe\leq \tau_NC
\end{align*}
and therefore $\lim_{N\rightarrow\infty}\hat{u}_N=u$ weakly in $L^2(\Omega\times(0,T);L^2(\re^d))$.
\end{proof}

\begin{lemma}\label{200909_lem01}
The sequence $(M_N)_{N\in\na}$ converges strongly to $M:=\int_0^\cdot \Phi\,dW_s$ in $L^2(\Omega\times(0,T);L^2(\re^d))$ as $N\rightarrow\infty$.
\end{lemma}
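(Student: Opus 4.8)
The plan is to obtain the convergence directly from the It\^{o} isometry, the point being that $M_N-M$ is the stochastic integral of $\Phi_N-\Phi$ and that the convergence $\Phi_N\to\Phi$ in $L^2(\Omega\times(0,T);\HS(L^2(\re^d)))$ has already been recorded via \cite[Lemma 12, p.52]{Simon07}. First I would check that $M_N$ is a genuine It\^{o} integral: for each $N\in\na$ the integrand $\Phi_N=\sum_{k=0}^{N-1}\Phi^k\chitk$ is a simple, $(\mathcal{F}_t)_{t\geq 0}$-adapted process with values in $\HS(L^2(\re^d))$, since $\Phi^k=\frac{1}{\tau_N}\int_{t_{k-1}}^{t_k}\Phi(s)\,ds$ is $\mathcal{F}_{t_k}$-measurable by the progressive measurability of $\Phi$. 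Hence $M_N=\int_0^\cdot\Phi_N\,dW_s$ is a well-defined continuous square-integrable $L^2(\re^d)$-valued martingale, and the same holds for $M=\int_0^\cdot\Phi\,dW_s$ because $\Phi\in L^2(\Omega\times(0,T);\HS(L^2(\re^d)))$.

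Next, for fixed $t\in[0,T]$, linearity of the stochastic integral gives $M_N(t)-M(t)=\int_0^t(\Phi_N-\Phi)\,dW_s$, so the It\^{o} isometry yields
\begin{align*}
\erwb\|M_N(t)-M(t)\|_2^2\erwe=\erwb\int_0^t\|\Phi_N(s)-\Phi(s)\|_{\HS}^2\,ds\erwe\leq\erwb\int_0^T\|\Phi_N(s)-\Phi(s)\|_{\HS}^2\,ds\erwe,
\end{align*}
a bound that is uniform in $t$. Integrating over $t\in(0,T)$ then gives
\begin{align*}
\erwb\int_0^T\|M_N(t)-M(t)\|_2^2\,dt\erwe\leq T\,\|\Phi_N-\Phi\|_{L^2(\Omega\times(0,T);\HS(L^2(\re^d)))}^2,
\end{align*}
and the right-hand side tends to $0$ as $N\rightarrow\infty$ by the convergence $\Phi_N\to\Phi$ recalled above, which is exactly the asserted strong convergence of $(M_N)_{N\in\na}$ to $M$ in $L^2(\Omega\times(0,T);L^2(\re^d))$.

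I do not expect a genuine obstacle: the whole argument is one application of the It\^{o} isometry combined with an already-established convergence. The only points deserving a sentence of care are the adaptedness of the simple process $\Phi_N$ (needed so that $M_N$ is defined as an It\^{o} integral in the first place) and the fact that $M_N$ and $M$ indeed lie in $L^2(\Omega\times(0,T);L^2(\re^d))$, which follows from the same isometry applied to $\Phi_N$ and $\Phi$ separately together with their uniform $L^2(\Omega\times(0,T);\HS(L^2(\re^d)))$-bounds.
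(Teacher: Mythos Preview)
Your proof is correct and follows essentially the same approach as the paper: both arguments write $M_N(t)-M(t)=\int_0^t(\Phi_N-\Phi)\,dW_s$, apply the It\^{o} isometry to bound $\erwb\|M_N(t)-M(t)\|_2^2\erwe$ by $\erwb\int_0^T\|\Phi_N(s)-\Phi(s)\|_\HS^2\,ds\erwe$, integrate in $t$ to pick up the factor $T$, and conclude from the already-established convergence $\Phi_N\to\Phi$ in $L^2(\Omega\times(0,T);\HS(L^2(\re^d)))$. Your version is slightly more detailed in that you explicitly verify the adaptedness of $\Phi_N$ and the well-definedness of $M_N$, which the paper leaves implicit.
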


\begin{proof}
We recall that $(\Phi_N)_{N\in\mathbb{N}}$ converges strongly to $\Phi$ in $L^2(\Omega\times (0,T);HS(L^2(\mathbb{R}^d)))$ for $N\rightarrow\infty$.
From the It\^{o} isometry, we get
\begin{align*}
\erwb\int_0^T\|M_N(t)-M(t)\|_2^2\,dt\erwe&=\erwb\int_0^T\left\|\int_0^t\Phi_N(s)-\Phi(s)\,dW_s\right\|_2^2\,dt\erwe\\
&\leq T\erwb\int_0^T\|\Phi_N(s)-\Phi(s)\|_\HS^2\,ds\erwe\\
\end{align*}
and therefore the assertion follows.
\end{proof}
\begin{lemma}\label{convMN-hatM_N}
We have
\begin{align}\label{convMN-hatMN}
\lim_{N\rightarrow\infty}\erwb\int_0^T\|M_N(t)-\hat{M}_N(t)\|^2_2\,dt\erwe= 0.
\end{align}
In particular, $(\hat{M}_N)_{N\in\na}$ converges strongly to $M:=\int_0^\cdot \Phi\,dW_s$ in $L^2(\Omega\times(0,T);L^2(\re^d))$ as $N\rightarrow\infty$.
\end{lemma}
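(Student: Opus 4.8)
The plan is to estimate the piecewise-linear interpolant $\hat{M}_N$ against $M_N$ on each subinterval $[t_k,t_{k+1})$ and then sum. On the interval $[t_k,t_{k+1})$ we have $M_N(t) - \hat{M}_N(t) = (M_N(t) - M_N(t_k)) - \frac{t-t_k}{\tau_N}(M_N(t_{k+1})-M_N(t_k))$. Since $M_N(t)-M_N(t_k) = \int_{t_k}^t \Phi^k\,dW_s$ (recall $\Phi_N \equiv \Phi^k$ on this interval) and $M_N(t_{k+1})-M_N(t_k) = \Phi^k\triangle_{k+1}W$, both terms are controlled by stochastic integrals of $\Phi^k$ over subintervals of length at most $\tau_N$. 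Applying the triangle inequality in $L^2(\re^d)$, then squaring, then taking expectations, and using the It\^{o} isometry, each term contributes something of order $\erwb\tau_N\|\Phi^k\|_\HS^2\erwe$, which after summation over $k$ is bounded by $\tau_N$ times $\erwb\int_0^T \|\Phi(s)\|_\HS^2\,ds\erwe = \tau_N\,C_\Phi$ (using the same estimate $\erwb\tau_N\|\Phi^k\|_\HS^2\erwe \le \erwb\int_{t_{k-1}}^{t_k}\|\Phi(s)\|_\HS^2\,ds\erwe$ as in Lemma \ref{210521_01}). Since $\tau_N = T/N \to 0$, this gives \eqref{convMN-hatMN}.

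In detail, I would first fix $t\in[t_k,t_{k+1})$ and write, pointwise in $\Omega$,
\[
\|M_N(t)-\hat{M}_N(t)\|_2 \le \|M_N(t)-M_N(t_k)\|_2 + \frac{t-t_k}{\tau_N}\|M_N(t_{k+1})-M_N(t_k)\|_2,
\]
so that $\|M_N(t)-\hat{M}_N(t)\|_2^2 \le 2\|M_N(t)-M_N(t_k)\|_2^2 + 2\|M_N(t_{k+1})-M_N(t_k)\|_2^2$. Integrating over $t\in[t_k,t_{k+1})$, taking expectations, and using the It\^{o} isometry (and Fubini) on the first term gives
\[
\erwb\int_{t_k}^{t_{k+1}}\|M_N(t)-M_N(t_k)\|_2^2\,dt\erwe = \erwb\int_{t_k}^{t_{k+1}}(t-t_k)\|\Phi^k\|_\HS^2\,dt\erwe \le \frac{\tau_N^2}{2}\erwb\|\Phi^k\|_\HS^2\erwe,
\]
while the second term contributes $\tau_N\erwb\|\Phi^k\triangle_{k+1}W\|_2^2\erwe = \tau_N^2\erwb\|\Phi^k\|_\HS^2\erwe$ by the It\^{o} isometry. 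Summing over $k=0,\dots,N-1$ and using $\tau_N\erwb\|\Phi^k\|_\HS^2\erwe \le \erwb\int_{t_{k-1}}^{t_k}\|\Phi(s)\|_\HS^2\,ds\erwe$, the whole sum is bounded by a constant multiple of $\tau_N\,C_\Phi$, which tends to $0$ as $N\to\infty$. This proves \eqref{convMN-hatMN}.

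For the ``in particular'' statement, I would combine \eqref{convMN-hatMN} with Lemma \ref{200909_lem01}: by the triangle inequality in $L^2(\Omega\times(0,T);L^2(\re^d))$,
\[
\|\hat{M}_N - M\|_{L^2(\Omega\times(0,T);L^2(\re^d))} \le \|\hat{M}_N - M_N\|_{L^2(\Omega\times(0,T);L^2(\re^d))} + \|M_N - M\|_{L^2(\Omega\times(0,T);L^2(\re^d))},
\]
and both terms on the right go to $0$ as $N\to\infty$. I do not anticipate a serious obstacle here; the only mild subtlety is bookkeeping the constants and making sure the It\^{o} isometry is applied correctly on the partial subinterval $[t_k,t)$ where the integrand is the $\mathcal{F}_{t_k}$-measurable (hence admissible) process $\Phi^k$ — but this is exactly the structure already exploited in Lemma \ref{210521_01}, so it is routine.
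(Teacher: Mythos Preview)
Your proof is correct and follows essentially the same approach as the paper: split $M_N(t)-\hat{M}_N(t)$ on each subinterval into $\int_{t_k}^t\Phi^k\,dW_s$ and a multiple of $\int_{t_k}^{t_{k+1}}\Phi^k\,dW_s$, apply the It\^{o} isometry, and sum to obtain a bound of order $\tau_N\,C_\Phi$. Your bookkeeping is slightly tighter (you retain the factor $\tfrac{t-t_k}{\tau_N}\le 1$ and evaluate $\int_{t_k}^{t_{k+1}}(t-t_k)\,dt=\tau_N^2/2$ exactly), yielding a constant $3$ instead of the paper's $4$, but the argument is otherwise identical, and the ``in particular'' part via Lemma~\ref{200909_lem01} matches the paper as well.
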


\begin{proof}
For any $t\in[t_k,t_{k+1})$ and $k\in\{0,\ldots,N-1\}$ we have
\begin{align*}
\|M_N(t)-\hat{M}_N(t)\|_2^2
&=\left\|\int_0^t\Phi_N\,dW_s-\frac{t-t_k}{\tau_N}\int_{t_k}^{t_{k+1}}\Phi_N\,dW_s-\int_0^{t_k}\Phi_N\,dW_s\right\|_2^2\\
&\leq 2\left(\left\|\int_{t_k}^t\Phi^k\,dW_s\right\|_2^2
+\left\|\int_{t_k}^{t_{k+1}}\Phi^k\,dW_s\right\|_2^2\right).
\end{align*}
Using the It\^{o} isometry and then H\"older inequality, it follows that
\begin{align*}
&\int_0^T\mathbb{E}\left[\|M_N(t)-\hat{M}_N(t)\|_2^2\right]\,dt
=\sum_{k=0}^{N-1}\int_{t_k}^{t_{k+1}}\erwb \|M_N(t)-\hat{M}_N(t)\|_2^2\erwe\,dt\\
&\leq 2\sum_{k=0}^{N-1}\int_{t_k}^{t_{k+1}}\left(\int_{t_k}^t \erwb\|\Phi^k\|^2_{\HS}\erwe\,ds+\int_{t_k}^{t_{k+1}}\erwb\|\Phi^k\|^2_{\HS}\erwe\,ds\right)\,dt\\
&\leq 4(\tau_N)^2\sum_{k=0}^{N-1}\erwb\|\Phi^k\|^2_{\HS}\erwe\\
&\leq 4\tau_N\Vert \Phi\Vert_{L^2(\Omega\times (0,T);HS(L^2(\mathbb{R}^d)))},
\end{align*}
hence \eqref{convMN-hatMN} holds true. The convergence result is a direct consequence of Lemma \ref{200909_lem01}.
\end{proof}

In the next steps, we characterize $\frac{\partial}{\partial t}(\hat{u}_N-\hat{M}_N)$ and its weak limit. 
\begin{lemma}\label{200909_lem02}
Passing to a not relabeled subsequence if necessary, $(\frac{\partial}{\partial t}(\hat{u}_N(t)-\hat{M}_N(t)))_{N\in\mathbb{N}}$ weakly converges to $\frac{\partial}{\partial t}(u-\int_0^{\cdot} \Phi\,dW_s)$ in $L^{p'}(\Omega\times (0,T);\mathcal{V}')$.
\end{lemma}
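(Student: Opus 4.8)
The strategy is to identify the piecewise-constant time derivative of $\hat u_N - \hat M_N$ explicitly, show it is bounded in $L^{p'}(\Omega\times(0,T);\mathcal{V}')$, extract a weakly convergent subsequence, and then verify that the weak limit is the distributional time derivative of $u - \int_0^\cdot \Phi\,dW_s$. First I would compute $\frac{\partial}{\partial t}(\hat u_N - \hat M_N)$ on each subinterval $[t_k,t_{k+1})$: by definition of $\hat u_N$ and $\hat M_N$ as the piecewise-linear interpolants, this derivative equals $\frac{1}{\tau_N}\big((u^{k+1}-u^k) - (M_N(t_{k+1})-M_N(t_k))\big)$, which by the time-discrete equation \eqref{timediscretqu} and the definition $M_N(t_{k+1})-M_N(t_k) = \Phi^k\triangle_{k+1}W$ telescopes to $\diver(\nablas{u^{k+1}}) = \Delta_p(u^{k+1})$. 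Hence $\frac{\partial}{\partial t}(\hat u_N - \hat M_N) = \sum_{k=0}^{N-1}\Delta_p(u^{k+1})\chitk = \Delta_p(u_N^r)$ in the sense of $\mathcal{V}'$-valued distributions on $(0,T)$, a.s. in $\Omega$.

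Next I would establish the a priori bound. Since $|\Delta_p(v)|$ is controlled in $\mathcal{V}'$ by $\||\nabla v|^{p-2}\nabla v\|_{p'} = \|\nabla v\|_p^{p-1}$, Hölder's inequality in $(\omega,t)$ with exponent $p/(p-1) = p'$ gives
\begin{align*}
\erwb\int_0^T\|\Delta_p(u_N^r)\|_{\mathcal{V}'}^{p'}\,dt\erwe \leq \erwb\int_0^T\|\nabla u_N^r\|_p^{(p-1)p'}\,dt\erwe = \erwb\int_0^T\|\nabla u_N^r\|_p^{p}\,dt\erwe,
\end{align*}
which is bounded uniformly in $N$ by Lemma \ref{boundednesses}(ii). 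Therefore $(\frac{\partial}{\partial t}(\hat u_N - \hat M_N))_{N\in\na}$ is bounded in $L^{p'}(\Omega\times(0,T);\mathcal{V}')$, and by reflexivity of that space (as $1<p'<\infty$ and $\mathcal{V}'$ is reflexive) we may pass to a not relabeled subsequence converging weakly to some $\xi \in L^{p'}(\Omega\times(0,T);\mathcal{V}')$.

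Finally I would identify $\xi = \frac{\partial}{\partial t}(u - \int_0^\cdot\Phi\,dW_s)$. This is a closedness-of-the-distributional-derivative argument: for any scalar test function $\varphi\in C_c^\infty((0,T))$ and any $\mathcal{F}$-measurable $\mathcal{V}$-valued test element (or simply by testing against $\varphi' $ and $\varphi$ respectively and using Fubini), one has $\int_0^T \frac{\partial}{\partial t}(\hat u_N - \hat M_N)\,\varphi\,dt = -\int_0^T (\hat u_N - \hat M_N)\,\varphi'\,dt$; since $\hat u_N \rightharpoonup u$ and $\hat M_N \to \int_0^\cdot\Phi\,dW_s$ in $L^2(\Omega\times(0,T);L^2(\re^d))$ (Lemmas above and Lemma \ref{convMN-hatM_N}) while the left-hand side converges weakly in $L^{p'}(\Omega\times(0,T);\mathcal{V}')$ to $\int_0^T\xi\,\varphi\,dt$, passing to the limit on both sides and using the continuous embedding $L^2(\re^d)\hookrightarrow\mathcal{V}'$ identifies $\xi$ with the distributional time derivative of $u - \int_0^\cdot\Phi\,dW_s$, as claimed. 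The one point requiring a little care — the main (modest) obstacle — is making the duality pairing in the integration-by-parts identity rigorous across the two different function spaces, i.e. ensuring the pairing $\langle\cdot,\cdot\rangle_{\mathcal{V}',\mathcal{V}}$ is compatible with the $L^2$ inner product on the overlap, so that the weak-$L^{p'}(\mathcal{V}')$ limit and the weak-$L^2(L^2)$ limit can be matched; this is exactly where the Gelfand-triple structure $\mathcal{V}\hookrightarrow L^2(\re^d)\hookrightarrow\mathcal{V}'$ is used.
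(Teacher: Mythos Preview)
Your proposal is correct and follows essentially the same approach as the paper: both compute the piecewise-constant derivative explicitly from \eqref{timediscretqu} to obtain $\Delta_p(u_N^r)$, bound it in $L^{p'}(\Omega\times(0,T);\mathcal{V}')$ via $\|\nabla u_N^r\|_p^p$ and Lemma~\ref{boundednesses}(ii), extract a weak subsequential limit, and identify it by integration by parts using the known convergences of $(\hat u_N)$ and $(\hat M_N)$. Your remark about compatibility of the $\mathcal{V}',\mathcal{V}$ pairing with the $L^2$ inner product via the Gelfand triple is exactly the point that makes the identification step go through.
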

\begin{proof}
Let $k\in\{0,\dots,N-1\}$. For $t\in[t_k,t_{k+1})$ there holds
\begin{align*}
\frac{\partial}{\partial t}\left(\hat{u}_N(t)-\hat{M}_N(t)\right)
&=\frac{u^{k+1}-u^k}{\tau_N}-\frac{M_N(t_{k+1})-M_N(t_k)}{\tau_N}.
\end{align*}
From \eqref{timediscretqu} it follows that
\begin{align*}
u^{k+1}-u^k-\tau_N\Delta_p(u^{k+1})&=\Phi^k\triangle_{k+1}W=M_N(t_{k+1})-M_N(t_k).
\end{align*}
Thus, \eqref{timediscretqu} can be rewritten as
\begin{align*}
\frac{\partial}{\partial t}\left(\hat{u}_N(t)-\hat{M}_N(t)\right)=\Delta_p(u^{k+1})\text{ for }t\in[t_k,t_{k+1}).
\end{align*}
By H\"older's inequality, for $t\in[t_k,t_{k+1})$ it follows that 
\begin{align*}
&\left\|\frac{\partial}{\partial t}\left(\hat{u}_N(t)-\hat{M}_N(t)\right)\right\|_{\mathcal{V}'}\leq\sup_{\varphi\in \mathcal{V},\|\varphi\|_{\mathcal{V}}\leq1}\int_{\re^d}|\nabla u^{k+1}|^{p-1}|\nabla\varphi|\,dx\\
&\leq\|\nabla u^{k+1}\|_p^{p-1}.
\end{align*}
According to this we get
\begin{align*}
\erwb\int_0^T\left\|\frac{\partial}{\partial t}\left(\hat{u}_N(t)-\hat{M}_N(t)\right)\right\|_{\mathcal{V}'}^{p'}\,dt\erwe
&\leq\erwb\sum_{n=0}^{N-1}\int_{t_n}^{t_{n+1}}\|\nabla u^{n+1}\|_p^p\,dt\erwe\\
&=\erwb\int_0^T\|\nabla u_N^r\|_p^p\,dt\erwe.
\end{align*}
Now, from Lemma \ref{boundednesses} (ii), it follows that $(\frac{\partial}{\partial t}(\hat{u}_N-\hat{M}_N))_{N\in\mathbb{N}}$ is bounded in $L^{p'}(\Omega\times(0,T);\mathcal{V}')$ and therefore we may extract a not relabeled subsequence having a weak limit in $L^{p'}(\Omega\times(0,T);\mathcal{V}')$ for $N\rightarrow\infty$. Using integration by parts and the weak convergence of $(\hat{u}_N)_{N\in\mathbb{N}}$ and $(\hat{M}_N)_{N\in\mathbb{N}}$ we may conclude that the limit element is $\frac{\partial}{\partial t}(u-\int_0^{\cdot} \Phi\,dW_s)$.
\end{proof}
\subsubsection{Existence and uniqueness result}
\begin{proposition}
For $q=\min\{2,p\}$, let 
\[u\in L^q(\Omega\times (0,T);\mathcal{V})\cap L^2(\Omega\times (0,T);L^2(\mathbb{R}^d))\] 
be the weak limit of $(\hat{u}_N)_{N\in\mathbb{N}}$ as $N\rightarrow\infty$ in $L^2(\otwr{2})$. Then $u$ is a solution to \eqref{spdeadditiv}.
\end{proposition}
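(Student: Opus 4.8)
The strategy is to pass to the limit $N\to\infty$ in \eqref{timediscretqu}, to identify the weak limit of the nonlinearity by the Minty--Browder monotonicity trick, and then to check that the limit $u$ meets all the requirements of Definition~\ref{solution}. \emph{Step 1 (the limit equation).} Since $\erwb\int_0^T\|\nablas{u_N^r}\|_{p'}^{p'}\,dt\erwe=\erwb\int_0^T\|\nabla u_N^r\|_p^p\,dt\erwe$ is bounded by Lemma~\ref{boundednesses}(ii), along a further subsequence $\nablas{u_N^r}\rightharpoonup\chi$ in $L^{p'}(\otwrd{p'})$, whence $\Delta_p(u_N^r)=\diver(\nablas{u_N^r})\rightharpoonup\diver\chi$ in $L^{p'}(\Omega\times(0,T);\mathcal V')$. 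Since $\frac{\partial}{\partial t}(\hat u_N-\hat M_N)=\Delta_p(u_N^r)$ (see the proof of Lemma~\ref{200909_lem02}), comparison with Lemma~\ref{200909_lem02} yields $\frac{\partial}{\partial t}\bigl(u-\int_0^\cdot\Phi\,dW_s\bigr)=\diver\chi$ in $L^{p'}(\Omega\times(0,T);\mathcal V')$; a standard integration-by-parts argument against test functions $\psi(\omega)\varphi(t)v$ ($v\in\mathcal V$, $\varphi\in C^1([0,T])$, $\varphi(0)=1$, $\varphi(T)=0$), together with $\hat u_N(0)=u_0$, $\hat M_N(0)=0$ and the weak convergences of $(\hat u_N)_N$, $(\hat M_N)_N$, then gives $u(0)=u_0$ and
\[u(t)=u_0+\int_0^t\diver\chi(s)\,ds+\int_0^t\Phi(s)\,dW_s\quad\text{in }\mathcal V'\]
for a.e.\ $\omega$ and all $t\in[0,T]$. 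Because $u\in L^q(\Omega\times(0,T);\mathcal V)\cap L^2(\otwr{2})$, $\diver\chi\in L^{p'}(\Omega\times(0,T);\mathcal V')$ and $\Phi\in L^2(\Omega\times(0,T);\HS(\lz))$, It\^o's formula in the Gelfand triple $\mathcal V\hookrightarrow\lz\hookrightarrow\mathcal V'$ (cf.\ \cite{LiuRock}) provides an $\lz$-continuous modification of $u$, which is $(\mathcal F_t)$-adapted since $u$ is a weak limit of the adapted processes $(u_N^l)_N$, and
\[\erwb\|u(T)\|_2^2\erwe-\erwb\|u_0\|_2^2\erwe=2\erwb\int_0^T\langle\diver\chi(t),u(t)\ranglev\,dt\erwe+\erwb\int_0^T\|\Phi(t)\|_\HS^2\,dt\erwe.\]

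\emph{Step 2 (a sharp discrete energy identity).} For the discrete solutions all members of \eqref{timediscretqu} belong to $\lz$, hence $\Delta_p(u^{k+1})=\tfrac1{\tau_N}(u^{k+1}-u^k-\Phi^k\triangle_{k+1}W)\in\lz$; expanding $\|u^{k+1}\|_2^2-\|u^k\|_2^2=2(u^{k+1}-u^k,u^{k+1})-\|u^{k+1}-u^k\|_2^2$ in $\lz$, substituting $u^{k+1}-u^k=\tau_N\Delta_p(u^{k+1})+\Phi^k\triangle_{k+1}W$, taking expectations and using $\erwb(\Phi^k\triangle_{k+1}W,u^k)\erwe=0$ (as in Lemma~\ref{210521_01}), the mixed terms cancel and
\[\erwb\|u^{k+1}\|_2^2-\|u^k\|_2^2\erwe=-2\tau_N\erwb\|\nabla u^{k+1}\|_p^p\erwe+\erwb\|\Phi^k\triangle_{k+1}W\|_2^2\erwe-\tau_N^2\erwb\|\Delta_p(u^{k+1})\|_2^2\erwe.\]
Summing over $k$, recalling $\tau_N\sum_k\erwb\|\nabla u^{k+1}\|_p^p\erwe=\erwb\int_0^T\|\nabla u_N^r\|_p^p\,dt\erwe$ and $\sum_k\erwb\|\Phi^k\triangle_{k+1}W\|_2^2\erwe=\erwb\int_0^T\|\Phi_N\|_\HS^2\,dt\erwe$, and discarding the nonnegative term $\tau_N^2\sum_k\erwb\|\Delta_p(u^{k+1})\|_2^2\erwe$, one obtains
\[2\erwb\int_0^T\|\nabla u_N^r(t)\|_p^p\,dt\erwe\leq\erwb\|u_0\|_2^2\erwe-\erwb\|u^N\|_2^2\erwe+\erwb\int_0^T\|\Phi_N(t)\|_\HS^2\,dt\erwe.\]
From \eqref{energyestimation1}, $(u^N)_N=(u_N^r(T))_N$ is bounded in $L^2(\Omega;\lz)$; writing $u^N=u_0+M_N(T)+\int_0^T\Delta_p(u_N^r(t))\,dt$ and letting $N\to\infty$ (with $M_N(T)\to\int_0^T\Phi\,dW_s$ in $L^2(\Omega;\lz)$ by the It\^o isometry and $\Delta_p(u_N^r)\rightharpoonup\diver\chi$), every weak cluster point of $(u^N)_N$ in $L^2(\Omega;\lz)$ coincides with $u(T)$ of Step~1, so $u^N\rightharpoonup u(T)$ in $L^2(\Omega;\lz)$. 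Taking $\limsup_N$ above and using $\Phi_N\to\Phi$ in $L^2(\Omega\times(0,T);\HS(\lz))$, weak lower semicontinuity of the norm and the It\^o identity of Step~1,
\[\limsup_{N\to\infty}\erwb\int_0^T\langle-\Delta_p(u_N^r(t)),u_N^r(t)\ranglev\,dt\erwe=\limsup_{N\to\infty}\erwb\int_0^T\|\nabla u_N^r(t)\|_p^p\,dt\erwe\leq\erwb\int_0^T\langle-\diver\chi(t),u(t)\ranglev\,dt\erwe.\]

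\emph{Step 3 (Minty's trick and conclusion).} The operator $-\Delta_p:\mathcal V\to\mathcal V'$ is monotone by \eqref{technicalineq}, so $0\leq\erwb\int_0^T\langle-\Delta_p(u_N^r(t))+\Delta_p(v(t)),u_N^r(t)-v(t)\ranglev\,dt\erwe$ for every $v\in L^p(\Omega\times(0,T);\mathcal V)$. Passing to the limit (via $\Delta_p(u_N^r)\rightharpoonup\diver\chi$ in $L^{p'}(\Omega\times(0,T);\mathcal V')$, $\nabla u_N^r\rightharpoonup\nabla u$ in $L^p(\otwrd{p})$ and the $\limsup$-bound of Step~2) gives $0\leq\erwb\int_0^T\langle-\diver\chi(t)+\Delta_p(v(t)),u(t)-v(t)\ranglev\,dt\erwe$; choosing $v=u-\lambda w$, dividing by $\lambda>0$, letting $\lambda\downarrow0$ and invoking the hemicontinuity of $-\Delta_p$ yields $\diver\chi=\Delta_p(u)=\diver(\nablas{u})$. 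Substituting back into the equation of Step~1, $u(t)=u_0+\int_0^t\Delta_p(u(s))\,ds+\int_0^t\Phi(s)\,dW_s$ for all $t\in[0,T]$, a.s., and together with the continuity, adaptedness and $L^q(\Omega\times(0,T);\mathcal V)$-regularity established above $u$ is a solution in the sense of Definition~\ref{solution}.

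The delicate point should be the sharp energy balance of Step~2. Since $-\Delta_p$ is not coercive on $\mathcal V$, no compactness argument is available, so the identification of $\chi$ relies entirely on the Minty trick, which in turn requires computing $\limsup_N\erwb\int_0^T\|\nabla u_N^r\|_p^p\,dt\erwe$ \emph{exactly}, not merely bounding it by the a priori estimate \eqref{energyestimation1}. This forces one to obtain the carefully organised discrete It\^o identity, with its correctly-signed correction term $\tau_N^2\sum_k\erwb\|\Delta_p(u^{k+1})\|_2^2\erwe$ (which only makes sense because the members of \eqref{timediscretqu} lie in $\lz$), to use the It\^o formula for $u$ in the nonstandard integrability setting $q=\min\{2,p\}$, and to establish the weak convergence $u^N\rightharpoonup u(T)$ — all of which the lack of coercivity makes necessary.
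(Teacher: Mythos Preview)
Your proof is correct and follows essentially the same route as the paper: pass to the limit in the discretized equation to obtain $u(t)-u_0-\int_0^t\diver\chi\,ds=\int_0^t\Phi\,dW_s$, apply It\^o's formula to $u$, compare with a discrete energy inequality, use weak lower semicontinuity of $\erwb\|u^N\|_2^2\erwe$ to get the key $\limsup$-bound, and conclude by Minty's trick.

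There are two minor differences of execution worth noting. First, in Step~2 you derive an exact discrete identity with the remainder $\tau_N^2\sum_k\erwb\|\Delta_p(u^{k+1})\|_2^2\erwe$ (exploiting that $\Delta_p(u^{k+1})\in\lz$ a.s.), whereas the paper simply quotes the inequality~\eqref{200908_02}, which was obtained via Young's inequality on the cross term. After you discard the nonnegative remainder, the two inequalities coincide, so nothing is gained or lost. Second, to show $u^N=\hat u_N(T)\rightharpoonup u(T)$ in $L^2(\Omega;\lz)$ you sum \eqref{timediscretqu} to write $u^N=u_0+M_N(T)+\int_0^T\Delta_p(u_N^r)\,dt$ and pass to the limit against test functions $\psi(\omega)v$ with $v\in\mathcal{V}$; the paper instead compares two integration-by-parts formulas for $\frac{d}{dt}(\hat u_N-M_N)$ and $\frac{d}{dt}(u-M)$ against test functions $h(x)g(t)$ with $g(T)\neq0$. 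Both arguments are valid; yours is arguably more direct. Finally, your Minty argument identifies $\diver\chi=\Delta_p(u)$ in $\mathcal{V}'$, while the paper works at the level of the vector field and obtains the slightly stronger $\chi=|\nabla u|^{p-2}\nabla u$ in $L^{p'}(\Omega\times(0,T)\times\re^d)^d$; for the solution concept in Definition~\ref{solution} your conclusion suffices.
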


\begin{proof}
From \eqref{timediscretqu} it follows that
\begin{align*}
\frac{\partial}{\partial t}\left(\hat{u}_N(t)-\hat{M}_N(t)\right)-\Delta_p(u^r_N)=0
\end{align*}
in $L^{p'}(\Omega\times (0,T);\mathcal{V}')$.
For $\psi\in\mathcal{D}(\re^d)$, $\rho\in\mathcal{D}(0,T)$ and $A\in\mathcal{F}$ we get
\begin{align}\label{testedtde}
\begin{split}
&-\int_A\int_0^T\int_{\re^d}\left(\hat{u}_N(t)-\hat{M}_N(t)\right)\rho_t(t)\psi\,dx\,dt\,dP\\
&+\int_A\int_0^T\int_{\re^d}|\nabla u_N^r(t)|^{p-2}\nabla u_N^r(t)\cdot\nabla\psi\rho(t)\,dx\,dt\,dP\\
&=0.
\end{split}
\end{align}
From Lemma \ref{boundednesses} (ii) it follows that $(\nabla u_N^r)_{N\in\mathbb{N}}$ is bounded in \\$L^p(\Omega\times(0,T);L^p(\re^d)^d)$. Because of that, $(|\nabla u_N^r|^{p-2}\nabla u_N^r)_{N\in\mathbb{N}}$ is bounded in $L^{p'}(\Omega\times(0,T);L^{p'}(\re^d)^d)$. Hence there exists a subsequence, still denoted by $(|\nabla u_N^r|^{p-2}\nabla u_N^r)_{N\in\mathbb{N}}$, such that
\begin{align*}
|\nabla u_N^r|^{p-2}\nabla u_N^r\rightharpoonup G\text{ as }N\rightarrow\infty\text{ in }L^{p'}(\Omega\times(0,T);L^{p'}(\re^d )^d)
\end{align*}
for an element $G\in L^{p'}(\Omega\times(0,T);L^{p'}(\re^d)^d)$. Using Lemma \ref{200909_lem02} for the passage to the limit on the left-hand side of \eqref{testedtde} it follows that 

\begin{align*}
-\int_A\int_0^T\int_{\re^d}(u(t)-M(t))\rho_t(t)\psi\,dx\,dt\,dP=-\int_A\int_0^T\int_{\re^d} G\cdot\nabla\psi\rho(t)\,dx\,dt\,dP.
\end{align*}
From the above equation it follows that
\begin{align}\label{200909_01}
\frac{d}{dt}\left(u-\int_0^{\cdot} \Phi\,dW_s\right)=\diver G
\end{align}
in $\mathcal{D}'((0,T)\times\re^d)$, a.s. in $\Omega$.
Because $G$ is an element of $L^{p'}(\Omega\times (0,T);L^{p'}(\re^d)^d)$, equation \eqref{200909_01} also holds in $L^{p'}(\Omega\times(0,T);\mathcal{V}')$. From \cite[Lemma 7.1, p.202]{Roubicek} it follows that
\begin{align*}
u-\int_0^{\cdot} \Phi\,dW_s\in \mathcal{C}([0,T];\mathcal{V}')\text{ a.s. in }\Omega.
\end{align*}
Taking into account that $\int_0^{\cdot} \Phi\,dW_s$ has paths in $\mathcal{C}([0,T];L^2(\mathbb{R}^d))$ a.s. in $\Omega$, it follows that $u\in \mathcal{C}([0,T];\mathcal{V}')$ a.s. in $\Omega$ and 
\begin{align}\label{210520_01}
u(t)-u_0+\int_0^t \diver G\,ds=\int_0^t \Phi(s)\,dW_s.
\end{align}
holds true for all $t\in [0,T]$, a.s. in $\Omega$. From \eqref{210520_01} and $u\in L^2(\Omega\times (0,T);L^2(\mathbb{R}^d))$ it follows that the It\^{o} formula (\cite{LiuRock}, Theorem 4.2.5) applies and therefore $u\in \mathcal{C}([0,T];L^2(\mathbb{R}^d))$ a.s. in $\Omega$. The proof is complete as soon as we show $G=|\nabla u|^{p-2}\nabla u$. This will be done in the following lemma.
\end{proof}

\begin{lemma}\label{G=nablasu}
There holds
\begin{align*}
G=|\nabla u|^{p-2}\nabla u.
\end{align*}
in $L^{p'}(\Omega\times(0,T)\times \mathbb{R}^d)^d$.
\end{lemma}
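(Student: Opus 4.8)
The plan is to use the classical Minty–Browder monotonicity trick adapted to the stochastic setting. The key inequality to establish is
\[
\limsup_{N\to\infty}\erwb\int_0^T\int_{\re^d}|\nabla u_N^r|^{p-2}\nabla u_N^r\cdot\nabla u_N^r\,dx\,dt\erwe\leq \erwb\int_0^T\int_{\re^d}G\cdot\nabla u\,dx\,dt\erwe,
\]
i.e. a ``lim sup'' bound on the energy. Once this is in hand, monotonicity of the map $\xi\mapsto|\xi|^{p-2}\xi$ together with the weak convergences $\nabla u_N^r\rightharpoonup\nabla u$ in $L^p$ and $|\nabla u_N^r|^{p-2}\nabla u_N^r\rightharpoonup G$ in $L^{p'}$ (both over $\Omega\times(0,T)\times\re^d$) gives, for every $w\in L^p(\Omega\times(0,T);L^p(\re^d)^d)$,
\[
0\leq\limsup_{N\to\infty}\erwb\int_0^T\int_{\re^d}(|\nabla u_N^r|^{p-2}\nabla u_N^r-|w|^{p-2}w)\cdot(\nabla u_N^r-w)\,dx\,dt\erwe\leq\erwb\int_0^T\int_{\re^d}(G-|w|^{p-2}w)\cdot(\nabla u-w)\,dx\,dt\erwe.
\]
Choosing $w=\nabla u\pm\lambda v$ for $v$ arbitrary and $\lambda\downarrow 0$, using hemicontinuity of $\xi\mapsto|\xi|^{p-2}\xi$, yields $G=|\nabla u|^{p-2}\nabla u$ a.e.

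To get the energy bound I would test the time-discrete equation \eqref{timediscretqu} with $u^{k+1}$ and sum, exactly as in Lemma \ref{210521_01}, but now keeping the energy identity rather than throwing away the square term; this already gave
\[
\tau_N\erwb\sum_{k=0}^{N-1}\|\nabla u^{k+1}\|_p^p\erwe=\erwb\int_0^T\int_{\re^d}|\nabla u_N^r|^{p-2}\nabla u_N^r\cdot\nabla u_N^r\,dx\,dt\erwe
\]
is controlled by $\frac12\erwb\|u_0\|_2^2-\|u^N\|_2^2\erwe+\frac12\|\Phi\|^2_{L^2(\Omega\times(0,T);\HS)}-\frac12\erwb\sum_k\|u^{k+1}-u^k\|_2^2\erwe$ plus the stochastic cross-terms, which vanish in expectation. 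On the other side, applying the Itô formula (\cite{LiuRock}, Theorem 4.2.5) to $\|u(t)\|_2^2$ using \eqref{210520_01} gives
\[
\halbe\erwb\|u(T)\|_2^2-\|u_0\|_2^2\erwe=-\erwb\int_0^T\int_{\re^d}G\cdot\nabla u\,dx\,dt\erwe+\halbe\|\Phi\|^2_{L^2(\Omega\times(0,T);\HS)}.
\]
Comparing the two and using weak lower semicontinuity of $v\mapsto\erwb\|v\|_2^2\erwe$ to pass to the limit in $-\frac12\erwb\|u^N\|_2^2\erwe$ (note $u^N=u_N^r(T)$, and one should relate $u^N$ to $u(T)$ — this requires a little care, e.g. comparing with $\hat u_N(T)$ and using that $\hat u_N\rightharpoonup u$), and dropping the nonnegative jump terms $\sum_k\|u^{k+1}-u^k\|_2^2$, delivers the desired $\limsup$ inequality.

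The main obstacle is the rigorous handling of the terminal term $\erwb\|u^N\|_2^2\erwe$: one needs weak-in-$L^2(\Omega;L^2(\re^d))$ convergence (or at least lower semicontinuity along the subsequence) of $u_N^r(T)=u^N$ towards $u(T)$, which does not come for free from the $L^2(\Omega\times(0,T);L^2)$ convergences established so far. I would address this by noting that \eqref{210520_01} together with the a priori bounds gives enough compactness of $(\hat u_N(T))_N$ — or alternatively, integrate the energy identity only up to a generic time and then integrate in $t$ over $(0,T)$, so that the terminal value is replaced by $\erwb\int_0^T\|u_N^r(t)\|_2^2\,dt\erwe$, for which weak lower semicontinuity applies directly. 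The remaining steps — vanishing of the Itô cross terms, convergence $\Phi_N\to\Phi$, and the Minty argument itself — are routine given the lemmas already proved.
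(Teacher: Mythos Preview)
Your proposal is correct and follows essentially the same route as the paper: the discrete energy inequality from Lemma \ref{210521_01} is compared with the It\^{o} formula applied to \eqref{210520_01} at $t=T$, and the terminal term is handled exactly as you anticipate---the paper extracts a weak $L^2(\Omega\times\re^d)$ limit $\chi$ of $\hat u_N(T)=u^N$ from the a priori bound and identifies $\chi=u(T)$ via an integration-by-parts argument in time, after which weak lower semicontinuity and the Minty trick with $\Lambda=\nabla u+\lambda\varphi$ finish the job.
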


\begin{proof}
We recall \eqref{200908_02}, sum over $k=0,\ldots, N-1$ and use that $\hat{u}_N(T)=u^N$ to obtain
\begin{align}\label{itofomapprox}
\begin{split}
\frac{1}{2}\erwb\Vert u_0\Vert^2_2\erwe \geq
&\erwb\int_0^T\int_{\mathbb{R}^d} |\nabla u^{r}_N|^{p-2}\nabla u^{r}_N\cdot\nabla u^{r}_N \,dx \,dt\erwe\\
&+\frac{1}{2} \erwb\Vert \hat{u}_N(T)\Vert^2_2\erwe
-\frac{1}{2}\erwb\int_0^T \Vert \Phi_N\Vert^2_{HS} \, dt\erwe
\end{split}
\end{align}
It\^{o} formula in \eqref{210520_01} with $t=T$ yields
\begin{align}\label{itoformconvapprox}
\halbe\erwb\|u_0\|_2^2\erwe=\erwb\int_0^T\int_{\re^d} G\cdot \nabla u\,dx\,dt\erwe+\halbe\erwb\|u(T)\|_2^2\erwe-\halbe\erwb\int_0^T\|\Phi\|_\HS^2\,dt\erwe.
\end{align}
Subtracting \eqref{itoformconvapprox} from \eqref{itofomapprox}, we get
\begin{align}\label{diffitosforapandconvap}
\begin{split}
0&\geq\,\erwb\int_0^T\int_{\re^d}|\nabla u^{r}_N|^{p-2}\nabla u^{r}_N\cdot\nabla u^{r}_N \,dx \,dt\erwe-\erwb\int_0^T\int_{\re^d} G\cdot\nabla u\,dx\,dt\erwe\\
-&\halbe\erwb\int_0^T\|\Phi_N\|_\HS^2\,dt\erwe+\halbe\erwb\int_0^T\|\Phi\|_\HS^2\,dt\erwe +\frac{1}{2} \erwb\Vert \hat{u}_N(T)\Vert^2_2\erwe-\halbe\erwb\|u(T)\|_2^2\erwe.
\end{split}
\end{align}
Since $\Phi_N\rightarrow\Phi$ in $L^2(\Omega\times(0,T);\operatorname{HS}(L^2(\mathbb{R}^d))$, if we show that
\begin{align}\label{210521_02}
\liminf_{N\rightarrow\infty}\erwb\| \hat{u}_N(T)\|_2^2-\|u(T)\|_2^2\erwe\geq 0
\end{align}
holds, we are able to conclude from \eqref{diffitosforapandconvap}
\begin{align}\label{nablauepsinlpforG}
\limsup_{N\rightarrow\infty}\erwb\int_0^T\int_{\re^d}|\nabla u^{r}_N|^{p-2}\nabla u^{r}_N\cdot\nabla u^{r}_N \,dx \,dt\erwe \leq\erwb\int_0^T\int_{\re^d} G\cdot\nabla u\,dx\,ds\erwe.
\end{align}
From Lemma \ref{210521_01}, \eqref{energyestimation1} it follows that there exists a constant $C_1>0$ not depending on $N\in\mathbb{N}$ such that
\begin{align*}
\erwb\| \hat{u}_N (T)\|_2^2\erwe\leq C_1
\end{align*}
Because of that, we find a function $\chi\in L^2(\Omega\times\re^d)$ and a subsequence, still denoted by $(\hat{u}_N(T))_N$, such that
\begin{align}\label{convueps(t)}
\hat{u}_N(T)\rightharpoonup\chi\text{ as }N\rightarrow\infty \ \text{ in }L^2(\Omega\times\re^d).
\end{align}
We want to verify $u(T)=\chi$.\\
Therefore, we choose arbitrary functions $h\in C_c^\infty(\re^d)$, $g\in C^\infty([0,T])$, $A\in\mathcal{F}$ and compare the expressions 
\begin{align*}
&\int_A\left\langle\frac{d}{dt}\left(u(t)-\int_0^t\Phi\,dW_s\right),\ h(x)g(t)\right\rangle\,dP\\
\text{and}\quad &\int_A\left\langle\frac{d}{dt}\left(\hat{u}_N(t)-\int_0^t \Phi_N\,dW_s\right), \ h(x)g(t)\right \rangle\,dP
\end{align*}
where $\langle\cdot,\cdot\rangle$ denotes the duality pairing of $L^p(0,T;\mathcal{V})$ and $L^{p'}(0,T;\mathcal{V}')$. By partial integration, for the first term we get
\begin{align}\label{eqforEgeq01}
\begin{split}
&\int_A\left\langle\frac{d}{dt}\left(u(t)-\int_0^t\Phi\,dW_s\right),\ h(x)g(t)\right\rangle\,dP\\
=\,&-\int_A\int_0^T\int_{\re^d}\left(u(t)-\int_0^t \Phi\,dW_s\right)h(x)\partial_t g(t)\,dx\,dt\,dP\\
&+\int_A\int_{\re^d}\left(u(T)-\int_0^T \Phi\,dW_s\right)h(x)g(T)\,dx\,dP\\
&-\int_A\int_{\re^d} u_0 h(x)g(0)\,dx\,dP.
\end{split}
\end{align}
On the other hand, for any $N\in\mathbb{N}$ we have
\begin{align*}
&\int_A\left\langle\frac{d}{dt}\left(\hat{u}_N(t)-\int_0^t \Phi_N\,dW_s\right), \ h(x)g(t)\right \rangle\,dP\\
=\,&-\int_A\int_0^T\int_{\re^d}\left(\hat{u}_N(t)-\int_0^t \Phi_N\,dW_s\right)h(x)\partial_t g(t)\,dx\,dt\,dP\\
&+\int_A\int_{\re^d}\left(\hat{u}_N(T)-\int_0^T \Phi_N\,dW_s\right)h(x)g(T)\,dx\,dP\\
&-\int_A\int_{\re^d}u_0h(x)g(0)\,dx\,dP.
\end{align*}
Passage to the limit with $N\rightarrow\infty$ in the above equation together with the weak convergence \eqref{convueps(t)} yields
\begin{align}\label{eqforEgeq02}
\begin{split}
&\int_A\left\langle\frac{d}{dt}\left(u(t)-\int_0^t\Phi\,dW_s\right),\ h(x)g(t)\right\rangle\,dP\\
\,=&-\int_A\int_0^T\int_{\re^d}\left(u(t)-\int_0^t\Phi\,dW_s\right)h(x)\partial_tg(t)\,dx\,dt\,dP\\
&+\int_A\int_{\re^d}\left(\chi(T)-\int_0^T \Phi\,dW_s\right)h(x)g(T)\,dx\,dP\\&-\int_A\int_{\re^d}u_0 h(x)g(0)\,dx\,dP.
\end{split}
\end{align}
Subtracting \eqref{eqforEgeq01} from \eqref{eqforEgeq02} we get
\begin{align*}
0=\int_A\int_{\re^d}\left(u(T)-\chi(T)\right)h(x)g(T)\,dx\,dP.
\end{align*}
If we choose $g\in C^\infty([0,T])$ such that $g(T)=1$, we can conclude
\begin{align*}
u(T)=\chi(T)\text{ a.e. in }\re^d, \text{ a.s. in }\Omega.
\end{align*}
From the weak lower semicontinuity of the norm it now follows that
\begin{align*}
\liminf_{N\rightarrow\infty}\erwb\| \hat{u}_N(T)\|_2^2\erwe\geq\erwb\|u(T)\|_2^2\erwe
\end{align*}
and thereby \eqref{210521_02} and \eqref{nablauepsinlpforG}.
Let $\Lambda$ be an element of $L^p(\Omega\times(0,T)\times\re^d)^d$. 
Using the fundamental inequality \eqref{technicalineq} we get
\begin{align*}
0\leq\,&\limsup_{N\rightarrow\infty}\erwb\int_0^T \langle |\nabla u^{r}_N|^{p-2}\nabla u^{r}_N-|\Lambda|^{p-2}\Lambda,\nabla u_N^r-\Lambda\rangle_{p',p}\,ds\erwe\\
\leq\,&\erwb\int_0^T\int_{\re^d} G\cdot\nabla u\,dx\,ds\erwe-\erwb\int_0^T\int_{\re^d}|\Lambda|^{p-2}\Lambda\cdot\nabla u\,dx\,ds\erwe\\
&-\erwb\int_0^T\int_{\re^d}(G-|\Lambda|^{p-2}\Lambda)\cdot\Lambda\,dx\,ds\erwe\\
=\,&\erwb\int_0^T\int_{\re^d}(G-|\Lambda|^{p-2}\Lambda)\cdot(\nabla u-\Lambda)\,dx\,ds\erwe.
\end{align*}
Now, choosing
$\Lambda:=\nabla u+\lambda\varphi$
with $\lambda\in\re$ and $\varphi\in L^p(\Omega\times(0,T)\times\re^d)^d$, we get

\begin{align*}
G=|\nabla u|^{p-2}\nabla u.
\end{align*}
in $L^{p'}(\Omega\times (0,T)\times\mathbb{R}^d)^d$.
\end{proof}

\section{Multiplicative noise}

Up to now we considered an additive stochastic noise $\Phi$. By a fixed point argument, we can also find a solution to \eqref{stochplaplaceequation} for a multiplicative noise. \\
Let $B:\Omega\times (0,T)\times\mathbb{R}\rightarrow \HS(L^2(\re^d))$ satisfy conditions $(B1)-(B3)$.
We define the solution operator
\begin{align*}
\Pi: L^2\left(\Omega\times(0,T)\times\re^d,\mathcal{P}\right)&\longrightarrow L^2\left(\Omega\times(0,T)\times\re^d,\mathcal{P}\right)\\
\rho &\longmapsto u_\rho
\end{align*}
where $u_\rho$ is the unique solution to
\begin{align*}
du-\diver(\nablas{u})\,dt&=B(\cdot,\rho)\,dW_t &\text{in }\Omega\times(0,T)\times\re^d\\
u(0,\cdot)&=u_0 &\text{in }\Omega\times\re^d 
\end{align*}
and $L^2\left(\Omega\times(0,T)\times\re^d,\mathcal{P}\right)$ denotes the space of $dP\otimes dt\otimes dx$ equivalence classes of progressively measurable functions $\rho:\Omega\times (0,T)\times\mathbb{R}^d\rightarrow\mathbb{R}$ that are square integrable on $\Omega\times (0,T)\times\mathbb{R}^d$.
This map is well defined, because $B(\cdot,\rho)$ is an Hilbert-Schmidt operator which satisfies all the conditions we used to proof the existence and uniqueness for a solution to \eqref{spdeadditiv}.\\
We want to show, that $\Pi$ is a strict contraction in $L^2\left(\Omega\times(0,T)\times\re^d,\mathcal{P}\right)$. For that we choose arbitrary $\rho_1,\rho_2\in L^2(\Omega\times(0,T)\times\re^d,\mathcal{P})$. Then $u_1:=\Pi(\rho_1)$ and $u_2:=\Pi(\rho_2)$ satisfy
\begin{align*}
du_1-\Delta_p(u_1)\,dt&=B(\cdot,\rho_1)\,dW_t &&\text{in }\Omega\times(0,T)\times\re^d\\
du_2-\Delta_p(u_2)\,dt&=B(\cdot,\rho_2)\,dW_t &&\text{in }\Omega\times(0,T)\times\re^d.
\end{align*} 
This leads us to
\begin{align*}
d(u_1-u_2)-[\Delta_p(u_1)-\Delta_p(u_2)]\,dt=[B(\cdot,\rho_1)-B(\cdot,\rho_2)]\,dW_t.
\end{align*}
By the It\^{o} formula, \eqref{technicalineq} and $(B3)$ we get for $t\in(0,T)$
\begin{align*}
\frac{1}{2}\erwb\|u_1(t)-u_2(t)\|_2^2\erwe&\leq\frac{1}{2}\int_0^t\erwb\|B(\cdot,\rho_1)-B(\cdot,\rho_2)\|_{HS}^2\erwe\\&\leq\frac{1}{2}\int_0^t\erwb L^2\|\rho_1-\rho_2\|_2^2\erwe\,ds.
\end{align*}
In the following, we show that $\Pi$ is a strict contraction for a weighted norm in $L^2(\Omega\times(0,T)\times\re^d,\mathcal{P})$.
For $\alpha>L^2$ and $t\in[0,T]$ using $e^{-\alpha t}=\frac{d}{dt}(-\frac{1}{\alpha}e^{-\alpha t})$ and integration by parts, we obtain
\begin{align*}
\erwb\int_0^T e^{-\alpha t} \|u_1(t)-u_2(t)\|_{2}^2\,dt\erwe\leq\,&L^2 \erwb\int_0^T e^{-\alpha t}\int_0^t\|\rho_1-\rho_2\|_2^2\,ds\,dt\erwe\\
=\,&L^2\erwb\int_0^T e^{-\alpha T}\int_0^T\|\rho_1-\rho_2\|_2^2\,ds\erwe\\
&-L^2\erwb\int_0^T-\frac{1}{\alpha}e^{-\alpha t}\int_0^t\|\rho_1-\rho_2\|_2^2\,ds\,dt\erwe\\
\leq\,&\frac{L^2}{\alpha}\erwb\int_0^T e^{-\alpha t}\|\rho_1-\rho_2\|_2^2\,dt\erwe.
\end{align*}
So $\Pi$ is a strict contraction in $L^2\left(\Omega\times(0,T)\times\re^d;e^{-\alpha t} ;\mathcal{P}\right)$. Note that $e^{-\alpha t}$ is absolutely continuous and positive, so that the Borel sets are the same. Now, the Banach fixed point theorem implies the existence of a unique element $\tilde{u}$ in $L^2\left(\Omega\times(0,T)\times\re^d;e^{-\alpha t} ;\mathcal{P}\right)$ such that
$$\Phi(\tilde{u})=\tilde{u}.$$
Obviously, $\tilde{u}$ is in $L^2\left(\Omega\times(0,T)\times\re^d,\mathcal{P}\right)$ 
and the uniqueness remains, since the weighted norm is equivalent to $\|\cdot\|_{L^2(\otr)}$ on $L^2\left(\Omega\times(0,T)\times\re^d,\mathcal{P}\right)$.

\section{Appendix: An alternative method for a special case}

From now on, we assume that the exponent $1<p<\infty$ is such that the continuous embedding $\sobolevp\hookrightarrow L^2(\re^d)$ is valid. This is the case for space dimension $d=2$ if $1\leq p<2$ and as well for space dimension $d>2$ if $1\leq p<d$ is such that $p\in [\frac{2d}{d+2},2]$, for $p=d=2$ or $p=d=1$ and for $p>d$ and $1<p\leq 2$. We consider \ref{spdeadditiv} in the framework of the Gelfand triple
\begin{align*}
\sobolevp\hookrightarrow L^2(\re^d)\hookrightarrow\sobolevq.
\end{align*}
We fix an $\mathcal{F}_0$-measurable initial datum $u_0\in L^2(\Omega\times\mathbb{R}^d)$ and search for an $(\mathcal{F}_t)_{t\geq 0}$-adapted stochastic process $u\in L^2(\Omega;\mathcal{C}([0,T];L^2(\re^d)))$ such that $u\in L^p(\Omega; L^p(0,T;\sobolevp))$ and
\begin{align*}
u(t)-u_0-\int_0^t \diver(\nablas{u})\,ds=\int_0^t \Phi\,dW_s\text{ for }t\in [0,T] \text{ a.s. in }\Omega.
\end{align*}

In the following, we will propose a regularized equation such that the associated operator satisfies hemicontinuity, weak monotonicity, coercivity and boundedness in the form as it is necessary for \cite[p.91]{LiuRock}. Hence we find a unique solution to the regularized equation. Then we show that this solution converges to a solution of \eqref{spdeadditiv} as the regularization parameter goes to $0$. There we need that $\Phi$ is an additive noise to pass to the limit in the equation. 

Let $0<\eps<1$ be arbitrary. We consider the regularized problem
\begin{align}\tag{AP}\label{approxequadd}
\begin{aligned}
du_{\eps} -\Delta_p(u_{\eps})\,dt+\eps|u_{\eps}|^{p-2}u_{\eps}\,dt&=\Phi \,dW_t &\text{in }\Omega\times(0,T)\times\re^d\\
u_{\eps}(0,\cdot)&=u_0 &\text{in }\Omega\times\re^d.
\end{aligned}
\end{align}

\begin{lemma}
For an $\mathcal{F}_0$-measurable initial datum $u_0\in L^2(\Omega\times\mathbb{R}^d)$ and any $0<\eps<1$  the equation \eqref{approxequadd} has a unique solution 
\[u_\eps\in L^2(\Omega;\mathcal{C}([0,T];L^2(\re^d)))\cap L^p(\Omega;L^p(0,T;\sobolevp)).\]
\end{lemma}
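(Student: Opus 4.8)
The plan is to recognize \eqref{approxequadd} as a stochastic evolution equation in the variational framework of \cite[Theorem 4.2.4, p.91]{LiuRock} and to check that the associated operator satisfies the required structural hypotheses. Set $V:=\sobolevp$, $H:=\lz$ and $V':=\sobolevq$; under the standing assumption on $p$ the embedding $V\hookrightarrow H$ is continuous, and since test functions are dense in $V$ and in $H$ we obtain a Gelfand triple $V\hookrightarrow H\equiv H'\hookrightarrow V'$ with $V$ separable and reflexive. Define
\begin{align*}
\mathcal{A}_\eps:V&\longrightarrow V'\\
v&\longmapsto \diver(\nablas{v})-\eps|v|^{p-2}v,
\end{align*}
which is well defined since $\nablas{v}\in L^{p'}(\re^d)^d$ and $|v|^{p-2}v\in L^{p'}(\re^d)\hookrightarrow V'$ (the latter embedding following from $V\hookrightarrow L^p(\re^d)$ and Hölder's inequality). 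Then \eqref{approxequadd} reads $d\ueps=\mathcal{A}_\eps(\ueps)\,dt+\Phi\,dW_t$, $\ueps(0)=u_0$, with $\mathcal{F}_0$-measurable $u_0\in L^2(\Omega\times\re^d)$ and additive coefficient $\Phi\in L^2(\Omega\times(0,T);\HS(\lz))$, which in particular satisfies the Lipschitz and linear-growth conditions on the noise that \cite{LiuRock} requires.

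Next I would verify the four structural conditions for $\mathcal{A}_\eps$, working with the exponent $\alpha:=p$. \emph{Hemicontinuity}: for fixed $u,v,w\in V$ the map $\lambda\mapsto\langle\mathcal{A}_\eps(u+\lambda v),w\rangle_{V',V}$ is continuous on $\re$, by continuity of the Nemytskii operators generated by $\xi\mapsto|\xi|^{p-2}\xi$ combined with Lebesgue's dominated convergence theorem, the integrands being dominated on bounded $\lambda$-intervals by Hölder's inequality. \emph{Monotonicity}: applying the algebraic inequality \eqref{technicalineq} in $\re^d$ to the gradient terms and in $\re$ to the zero-order terms yields $\langle\mathcal{A}_\eps(u)-\mathcal{A}_\eps(v),u-v\rangle_{V',V}\leq 0$, so weak monotonicity holds with constant $0$. \emph{Coercivity} --- the point of the regularization: for $v\in V$ one has
\[
\langle\mathcal{A}_\eps(v),v\rangle_{V',V}=-\|\nabla v\|_p^p-\eps\|v\|_p^p\leq-2^{1-p}\min\{\eps,1\}\,\|v\|_{\sobolevp}^p,
\]
so $\mathcal{A}_\eps$ is coercive with exponent $\alpha=p$; without the lower-order term the $p$-Laplacian controls only $\|\nabla v\|_p$ and is \emph{not} coercive on $\sobolevp$, because Poincar\'e's inequality is unavailable on $\re^d$, and it is precisely $\eps|v|^{p-2}v$ that supplies the missing control of $\|v\|_p$. \emph{Boundedness}: by Hölder's inequality $\|\mathcal{A}_\eps(v)\|_{V'}\leq\|\nabla v\|_p^{p-1}+\eps\|v\|_p^{p-1}\leq C\bigl(1+\|v\|_{\sobolevp}^{p-1}\bigr)$, which is the growth bound compatible with $\alpha=p$ since $(p-1)\tfrac{p}{p-1}=p$.

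Granting these four properties, \cite[Theorem 4.2.4, p.91]{LiuRock} yields a unique $(\mathcal{F}_t)_{t\geq0}$-adapted solution $\ueps\in L^p(\Omega;L^p(0,T;\sobolevp))\cap L^2(\Omega;\mathcal{C}([0,T];\lz))$ with
\[
\ueps(t)-u_0-\int_0^t\bigl(\Delta_p(\ueps)-\eps|\ueps|^{p-2}\ueps\bigr)\,ds=\int_0^t\Phi\,dW_s
\]
for all $t\in[0,T]$, a.s. in $\Omega$, which is exactly the claim. The main obstacle is the coercivity step: on $\re^d$ the $p$-Laplacian is not coercive on $\sobolevp$, and this is the very reason the term $\eps|\ueps|^{p-2}\ueps$ was added to \eqref{approxequadd}. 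A secondary point requiring care, especially when $p<2$, is the bookkeeping of exponents --- taking $\alpha=p$, dual growth exponent $p'=p/(p-1)$, and invoking $\|v\|_H\leq C\|v\|_V$ from the assumed embedding --- so that the hypotheses of \cite{LiuRock} are satisfied in precisely the stated form.
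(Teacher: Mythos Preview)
Your proposal is correct and follows essentially the same approach as the paper: define the regularized operator $A_\eps(v)=\Delta_p(v)-\eps|v|^{p-2}v$ on the Gelfand triple $\sobolevp\hookrightarrow L^2(\re^d)\hookrightarrow\sobolevq$, verify hemicontinuity, (weak) monotonicity, coercivity and boundedness, and apply \cite[Theorem~4.2.4]{LiuRock}. The paper merely asserts that these four conditions are ``easy to prove'' and invokes the theorem, so your write-up in fact supplies the details the paper omits, with the same emphasis on the regularizing term $\eps|v|^{p-2}v$ as the source of coercivity.
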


\begin{proof}
We define the operator
\begin{align*}
A_\eps:\sobolevp&\longrightarrow\sobolevq\\
v&\longmapsto \Delta_p(v)-\eps|v|^{p-2}v.
\end{align*}
It is easy to proof that $A_{\varepsilon}$ satisfies the hemicontinuity, weak monotonicity, coercivity and boundedness conditions of \cite[p.91]{LiuRock} in the framework of the Gelfand triple
\begin{align*}
\sobolevp\hookrightarrow L^2(\re^d)\hookrightarrow\sobolevq.
\end{align*}
Applying \cite[p.91]{LiuRock} provides, that, for each $0<\eps<1$, there exists a unique $(\mathcal{F}_t)_{t\geq 0}$-adapted stochastic process 
\[u_\eps\in L^2(\Omega, \mathcal{C}([0,T];L^2(\re^d)))\cap L^p(\Omega;L^p(0,T;\sobolevp))\] 
such that
\begin{align*}
u_\eps(t)-u_0-\int_{0}^t\Delta_p(u_\eps)\,ds+\int_0^t\eps|u_\eps|^{p-2}u_\eps\,ds=\int_0^t \Phi\,dW_s
\end{align*}
holds true in $L^2(\mathbb{R}^d)$ for all $t\in [0,T]$ a.s. in $\Omega$, thus $u_{\varepsilon}$ is a solution to \eqref{approxequadd}.
\end{proof}

Our intention is to show that $(u_\eps)_{0<\eps<1}$ converges to a solution of \eqref{spdeadditiv} as $\eps\downarrow0$.
The It\^{o} formula (see \cite[p.91f.]{LiuRock}) yields that for all $t\in[0,T]$ and any $0<\eps<1$ we have
\begin{align}\label{itoueps}
\begin{split}
\frac{1}{2}\erwb\|u_\eps(t)\|_2^2\erwe=&\frac{1}{2}\erwb\|u_0\|_2^2\erwe-\int_0^t \erwb\|\nabla\ueps\|_p^p\erwe\,ds-\eps\int_0^t\erwb\|\ueps\|_p^p\erwe \,ds\\
&+\frac{1}{2}\int_0^t\|\Phi\|_\HS^2\,ds.
\end{split}
\end{align}
If we use the boundedness of $\Phi$, from \eqref{itoueps} it follows that for any $t\in [0,T]$
\begin{align*}
\frac{1}{2}\erwb\|u_\eps(t)\|_2^2\erwe&\leq\frac{1}{2}\erwb\|u_0\|_2^2\erwe+\frac{\mathrm{C}_{\Phi}}{2}=:\frac{C_1}{2}.
\end{align*}
Now we get for all $0<\eps<1$
\begin{align}\label{uepsboundinl2}
\|u_\eps\|_{L^2(\Omega\times(0,T)\times\re^d)}^2\leq T\sup_{t\in(0,T)}\erwb\|u_\eps(t)\|_2^2\erwe\leq T C_1.
\end{align}
Since $L^2(\Omega\times(0,T)\times\re^d)$ is reflexive, there exists a weakly convergent subsequence, still denoted by $(u_\eps)_{0<\varepsilon<1}$ and an element $u\in L^2(\Omega\times(0,T)\times\re^d)$,  such that
\begin{align}\label{convuepsinl2}
u_\eps\rightharpoonup u\text{ as }\eps\downarrow0\text{ in }L^2(\Omega\times(0,T)\times\re^d).
\end{align}

\begin{proposition}\label{200904_prop1}
The element $u$ is the unique solution to \eqref{spdeadditiv}.
\end{proposition}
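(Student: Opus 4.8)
The plan is a vanishing-viscosity passage to the limit $\eps\downarrow 0$ in \eqref{approxequadd}, closed by Minty's monotonicity argument, carried out in the Gelfand triple $\sobolevp\hookrightarrow L^2(\re^d)\hookrightarrow\sobolevq$ which is available under the Appendix's standing embedding hypothesis. Uniqueness of the limit requires nothing new: it is precisely Lemma \ref{uniqueness}.

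\emph{A priori bounds and weak limits.} Evaluating \eqref{itoueps} and using the boundedness of $\Phi$ one obtains, besides \eqref{uepsboundinl2}, that $(\ueps)_{0<\eps<1}$ is bounded in $L^p(\Omega;L^p(0,T;\sobolevp))$ and that $\eps\,\erw\int_0^T\|\ueps\|_p^p\,dt$ is bounded uniformly in $\eps$. Hence $(\nablas{\ueps})_{0<\eps<1}$ is bounded in $L^{p'}(\Omega\times(0,T)\times\re^d)^d$, and since $p'(p-1)=p$,
\[\big\|\eps|\ueps|^{p-2}\ueps\big\|_{L^{p'}(\Omega\times(0,T)\times\re^d)}^{p'}=\eps^{p'-1}\Big(\eps\,\erw\int_0^T\|\ueps\|_p^p\,dt\Big)\longrightarrow 0\qquad(\eps\downarrow0),\]
so the zero-order penalisation tends to $0$ strongly in $L^{p'}$. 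Passing to a further not relabeled subsequence, $\ueps\rightharpoonup u$ in $L^p(\Omega;L^p(0,T;\sobolevp))$ — so the limit $u$ of \eqref{convuepsinl2} also belongs to this space — and $\nablas{\ueps}\rightharpoonup G$ in $L^{p'}(\Omega\times(0,T)\times\re^d)^d$ for some $G$.

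\emph{Passage to the limit in the equation.} Testing the weak form of \eqref{approxequadd} against $\chi_A\,h(x)g(t)$ with $A\in\mathcal F$, $h\in C_c^\infty(\re^d)$, $g\in C^\infty([0,T])$, exactly as in the proof of Lemma \ref{G=nablasu}, the penalisation term drops out by the strong convergence just established and the stochastic term converges because $\Phi$ is additive. One arrives at $\frac{d}{dt}\big(u-\int_0^\cdot\Phi\,dW_s\big)=\diver G$ in $\mathcal D'((0,T)\times\re^d)$ a.s.\ in $\Omega$, hence in $L^{p'}(\Omega\times(0,T);\sobolevq)$ since $\diver G$ lives there. By \cite[Lemma 7.1, p.202]{Roubicek}, $u-\int_0^\cdot\Phi\,dW_s$ has continuous $\sobolevq$-paths; as $\int_0^\cdot\Phi\,dW_s$ has continuous $L^2(\re^d)$-paths, so has $u$ with values in $\sobolevq$, and
\[u(t)-u_0+\int_0^t\diver G\,ds=\int_0^t\Phi\,dW_s\qquad\text{for all }t\in[0,T]\text{, a.s. in }\Omega.\]
Since $u\in L^2(\Omega\times(0,T)\times\re^d)$, the It\^o formula \cite[Theorem 4.2.5]{LiuRock} applies and upgrades the paths of $u$ to $\mathcal C([0,T];L^2(\re^d))$ a.s.; evaluating the identity at $t=0$ gives $u(0)=u_0$, so $u$ is a solution provided $G=\nablas{u}$.

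\emph{Identification $G=\nablas{u}$ — the main obstacle.} Writing the It\^o identity \eqref{itoueps} at $t=T$ together with the corresponding identity for the limit equation, subtracting, and discarding the nonnegative term $\eps\,\erw\int_0^T\|\ueps\|_p^p\,dt$, the $\Phi$-terms cancel and one is left with a relation from which $\limsup_{\eps\downarrow0}\erw\int_0^T\int_{\re^d}(\nablas{\ueps})\cdot\nabla\ueps\,dx\,dt\le\erw\int_0^T\int_{\re^d}G\cdot\nabla u\,dx\,dt$ follows, once the terminal inequality $\liminf_{\eps\downarrow0}\erw\|\ueps(T)\|_2^2\ge\erw\|u(T)\|_2^2$ is available. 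Feeding this into \eqref{technicalineq} with $\Lambda=\nabla u+\lambda\varphi$, $\lambda\in\re$, $\varphi\in L^p(\Omega\times(0,T)\times\re^d)^d$, yields $G=\nablas{u}$. The one genuinely nonroutine point, just as in Lemma \ref{G=nablasu}, is establishing the terminal inequality: one bounds $(\ueps(T))_{0<\eps<1}$ in $L^2(\Omega\times\re^d)$ via \eqref{itoueps}, extracts a weak limit $\chi$, identifies $\chi=u(T)$ by testing the two time-integrated weak formulations with $h(x)g(t)$, $g(T)=1$, and subtracting, and concludes by weak lower semicontinuity of the $L^2$-norm. I expect this terminal-value identification, together with the (routine but to-be-checked) fact that the constants in \eqref{itoueps} produced via \cite[p.91]{LiuRock} are genuinely $\eps$-independent, to be the only delicate points; everything else parallels the additive case already treated, and uniqueness is Lemma \ref{uniqueness}.
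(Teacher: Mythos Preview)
Your proposal is correct and follows essentially the same vanishing-viscosity/Minty route as the paper's own proof, right down to the terminal-value identification borrowed from Lemma~\ref{G=nablasu}; you even sharpen two points, observing that $\eps|u_\eps|^{p-2}u_\eps\to 0$ \emph{strongly} in $L^{p'}$ (the paper only records weak convergence) and that here the $\Phi$-terms cancel exactly when subtracting the two It\^o identities (in Lemma~\ref{G=nablasu} one had $\Phi_N$ versus $\Phi$).

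One imprecision to correct: your claim that $(u_\eps)_{0<\eps<1}$ is bounded in $L^p(\Omega;L^p(0,T;\sobolevp))$ does not follow from \eqref{itoueps}. That identity gives uniform bounds on $\sup_t\erw\|u_\eps(t)\|_2^2$, on $\erw\int_0^T\|\nabla u_\eps\|_p^p\,dt$, and on $\eps\,\erw\int_0^T\|u_\eps\|_p^p\,dt$, but on $\re^d$ with $p<2$ these do \emph{not} control $\erw\int_0^T\|u_\eps\|_p^p\,dt$ (a function can lie in $L^2(\re^d)$ with gradient in $L^p(\re^d)^d$ and still fail to be in $L^p(\re^d)$). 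The paper therefore does not assert this bound; it works with the $L^2$-bound on $u_\eps$ and the $L^p$-bound on $\nabla u_\eps$ separately, extracts a weak $L^p$-limit $v$ of $(\nabla u_\eps)$, and identifies $v=\nabla u$ by testing against $C_c^\infty$ functions using $u_\eps\rightharpoonup u$ in $L^2$. You should do likewise; your Minty step needs $\nabla u_\eps\rightharpoonup\nabla u$ in $L^p$, and this is how one gets it.
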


\begin{proof}
We consider the terms $\eps|u_\eps|^{p-2}u_\eps$ and $\Delta_p(\ueps)$ separately. Let's begin with the term $\eps|u_\eps|^{p-2}\ueps$ which we have added to get a coercive operator. Equation \eqref{itoueps} provides for $0<\eps<1$
\begin{align*}
\eps\int_0^T\erwb\|u_\eps\|_p^p\erwe\,ds=\,&\frac{1}{2}\erwb\|u_0\|_2^2\erwe-\frac{1}{2}\erwb\|u_\eps(T)\|_2^2\erwe-\int_0^T\erwb\|\nabla u_\eps\|_p^p\erwe\,ds\\
&+\frac{1}{2}\int_0^T\|\Phi\|_\HS^2\,ds\\
\leq\,&\frac{1}{2}\erwb\|u_0\|_2^2\erwe+\frac{\mathrm{C}_{\Phi}}{2}=\frac{C_1}{2}.
\end{align*}
Hence we have for $0<\eps<1$
\begin{align*}
\eps^{\frac{1}{p'}}\||u_\eps|^{p-2}\ueps\|_{L^{p'}(\otr)}&=\eps^{\frac{1}{p'}}\|u_\eps\|_{L^p(\otr)}^{p-1}\\
&=\left(\eps\|u_\eps\|_{L^p(\otr)}^p \right)^{\frac{p-1}{p}}\\
&\leq \left(\frac{C_1}{2}\right)^{\frac{p-1}{p}}.
\end{align*}
By Hölder's inequality this implies for any $\varphi\in L^p(\Omega;L^p(0,T;\sobolevp))$
\begin{align*}
\langle\eps|u_\eps|^{p-2}u_\eps,\varphi\rangle_{p',p}&\leq\eps^{\frac{1}{p'}}\||u_\eps|^{p-2}u_\eps\|_{L^{p'}(\otr)}\eps^{\frac{1}{p}}\|\varphi\|_{L^p(\otr)}\\
&\rightarrow 0\text{ as }\eps\downarrow0
\end{align*}
where $\langle\cdot,\cdot\rangle_{p',p}$ denotes $\langle\cdot,\cdot\rangle_{L^{p'}(\otr),L^p(\otr)}$. So
\begin{align}\label{convepsuepsuepsinlp'}
\eps|u_\eps|^{p-2}u_\eps\rightharpoonup0\text{ as }\eps\downarrow0\text{ in }L^{p'}(\otr)
\end{align}
is valid. Next, we want to study the convergence of $(\Delta_p(u_\eps))_{0<\varepsilon<1}$ in \\$L^{p'}(\Omega;L^{p'}(0,T;\sobolevq))$ as $\eps\downarrow 0$. Therefore we consider $(\nabla u_\eps)_{0<\varepsilon<1}$ first. The boundedness of $(\nabla u_\eps)_{0<\varepsilon<1}$ in $L^p(\otr)$ follows from \eqref{itoueps}. Indeed, $0<\eps<1$ we find
\begin{align}\label{nablauepsinlp}
\begin{split}
\int_0^T\erwb\|\nabla u_\eps\|_{p}^p\erwe\,ds=\,&\frac{1}{2}\erwb\|u_0\|_2^2\erwe-\frac{1}{2}\erwb\|u_\eps(T)\|_2^2\erwe-\eps\int_0^T\erwb\|u_\eps\|_p^p\erwe\,ds\\
&+\frac{1}{2}\int_0^T\erwb\|\Phi\|_\HS^2\erwe\,ds\leq\,\frac{C_1}{2}.
\end{split}
\end{align}
Hence, we can find a subsequence, still denoted by $(\nabla u_\eps)_{0<\varepsilon<1}$, and an element $v\in L^p(\otr)$ such that
\begin{align*}
\nabla u_\eps\rightharpoonup v\text{ as }\eps\downarrow0\text{ in }L^p(\otr).
\end{align*}
Recalling that $(u_\eps)_{0<\varepsilon<1}$ convergences weakly to $u$ in $L^2(\otr)$ as $\eps\downarrow0$, it is easy to see that $v=\nabla u$.

Thus we have
\begin{align}\label{convnablauepsinlp}
\nabla \ueps\rightarrow\nabla u\text{ as }\eps\downarrow0\text{ in }L^p(\otr).
\end{align}
In the next step we consider $(|\nabla u_\eps|^{p-2}\nabla u_\eps)_{0<\varepsilon<1}$ in $L^{p'}(\otr)^d$. Due to \eqref{nablauepsinlp} we have for $0<\eps<1$
\begin{align*}
\||\nabla u_\eps|^{p-2}\nabla u_\eps\|_{L^{p'}(\otr)}^{p'}&=\|\nabla u_\eps\|_{L^p(\otr)}^p\leq \frac{C_1}{2}.
\end{align*}
We conclude that there exists a subsequence, still denoted by\\ $(|\nabla u_\eps|^{p-2}\nabla u_\eps)_{0<\varepsilon<1}$, and $G\in L^{p'}(\otr)^d$ such that
\begin{align}\label{convuepsuepsinlp'}
|\nabla u_\eps|^{p-2}\nabla u_\eps \rightharpoonup G \text{ as }\eps\downarrow0\text{ in }L^{p'}(\otr).  
\end{align}
Now we consider $(\Delta_p(u_\eps))_{0<\varepsilon<1}$. By Hölder's inequality, for $0<\eps<1$ we get 
\begin{align*}
\|\Delta_p(u_\eps)\|_{-1,p'}&\leq\sup_{\varphi\in\sobolevp,\ \|\varphi\|_{1,p}\leq1}\int_{\re^d}|\nabla u_\eps|^{p-1}|\nabla\varphi|\dx\\
&\leq \sup_{\varphi\in\sobolevp,\ \|\varphi\|_{1,p}\leq1}\||\nabla u_\eps|^{p-1}\|_{p'}\|\nabla\varphi\|_p\\
&\leq \|\nabla u_\eps\|_p^{p-1}
\end{align*}
This yields together with \eqref{nablauepsinlp}
\begin{align*}
\|\Delta_p(\ueps)\|_{L^{p'}(\Omega\times(0,T);\sobolevq)}^{p'}&=\erwb\int_0^T\|\Delta_p(\ueps)\|_{-1,p'}^{p'}\, dt\erwe\\
&\leq\|\nabla u_\eps\|_{L^p(\otr)}^p\\
&\leq\frac{C_1}{2}.
\end{align*}
According to this we can find a subsequence, still denoted by $(\Delta_p(u_\eps))_{0<\varepsilon<1}$, and $U\in L^{p'}(\Omega;L^{p'}(0,T;\sobolevq))$ such that
\begin{align*}
\Delta_p(u_\eps)\rightharpoonup U\text{ as }\eps\downarrow0\text{ in }L^{p'}(\Omega;L^{p'}(0,T;\sobolevq)).
\end{align*}
From the Gauss-Green theorem and \eqref{convuepsuepsinlp'} now it follows that $U=\diver G$ and therefore
\begin{align*}
\Delta_p(\ueps)\rightharpoonup\diver G \text{ as }\eps\downarrow0\text{ in }L^{p'}(\Omega;L^{p'}(0,T;\sobolevq)).
\end{align*}
In the following, we want to use the achieved convergence results to pass to the limit in the regularized equation \eqref{approxequadd} for $\eps\downarrow0$. 
Since $u_\eps$ is a solution to \eqref{approxequadd}, we have
\begin{align*}
\frac{d}{dt}\left(u_\eps(t)-\int_0^t \Phi\,dW_s\right)=\Delta_p(u_\eps)-\eps|u_\eps|^{p-2}u_\eps \text{ in }\mathcal{D}'((0,T)\times\re^d)\text{ a.s. in }\Omega.
\end{align*}
We choose an arbitrary set $A\in\mathcal{F}$ and a test function $\varphi\in C_c^\infty([0,T]\times\re^d)$. Using $\varphi$ as a test function in the above equation and integrating over $A$ we arrive at  
\begin{align*}
&-\int_A\int_0^T\int_{\re^d}\left(u_\eps(t)-\int_0^t \Phi\,dW_s\right)\varphi_t\,dx\,dt\,dP\\
&=-\int_A\int_0^T\int_{\re^d}|\nabla u_\eps|^{p-2}\nabla u_\eps\cdot\nabla\varphi\,dx\,dt\,dP\\
&-\eps\int_A\int_0^T\int_{\re^d}|u_\eps|^{p-2}u_\eps\varphi\,dx\,dt\,dP.
\end{align*}
For $\eps\downarrow0$, from \eqref{convuepsinl2}, \eqref{convepsuepsuepsinlp'} and \eqref{convuepsuepsinlp'} it follows that
\begin{align*}
\int_A\int_0^T\int_{\re^d}\left(u(t)-\int_0^t \Phi\,dW_s\right)\varphi_t\,dx\,dt\,dP=\int_A\int_0^T\int_{\re^d} G\cdot\nabla \varphi\,dx\,dt\,dP.
\end{align*}
So we have
\begin{align*}
\frac{d}{dt}\left(u(t)-\int_0^t \Phi\,dW_s\right)=\diver G\text{ in }L^{p'}(0,T;W^{-1,p'}(\mathbb{R}^d))\text{ a.s. in }\Omega,
\end{align*}
and the above expression is equivalent to
\begin{align}\label{convapproxequadd}
u(t)-u_0-\int_0^t\diver G\,ds=\int_0^t \Phi\,dW_s \text { in } W^{-1,p'}(\mathbb{R}^d)\text{ for all } t\in [0,T] \text{ a.s. in }\Omega.
\end{align}
From \eqref{convuepsinl2} it follows that $\mathbb{E}\Vert u(t)\Vert^2_2< \infty$ a.e in $(0,T)$, hence from \cite[Theorem 4.2.5, p.91]{LiuRock} we get $u\in L^2(\Omega;\mathcal{C}([0,T];L^2(\mathbb{R}^d)))$.
Obviously we have completed the proof of Proposition \ref{200904_prop1} if we show $G=|\nabla u|^{p-2}\nabla u$. This can be done repeating the arguments of Lemma \ref{G=nablasu}. 
\end{proof}

\begin{flushleft}
\textbf{Acknowledgement:} This work has been supported by the German Research Foundation project ZI 1542/3-1.
\end{flushleft}

\bibliographystyle{unsrt}
\bibliography{Schmitz_Zimmermann_Bibliography.bib}
\end{document}